\documentclass{article}

\usepackage{stmaryrd,amssymb,amsmath,latexsym,amsthm,url,xfrac}
\usepackage[english]{babel}
\usepackage{eurosym}

\usepackage[usenames]{color}
\usepackage{amsthm}

\newtheorem{claim}{Claim}[section]
\newtheorem{principle}{Principle}[section]
\newtheorem{perspective}{Perspective}[section]
\newtheorem{proposition}{Proposition}[section]
\newtheorem{definition}{Definition}[section]

\newcommand{\isaq}{=_{ \small \mathsf{AQ}}}

\newcommand{\Nat}{{\mathbb N}}
\newcommand{\Int}{{\mathbb Z}}





\providecommand{\dotdiv}{
  \mathbin{
    \vphantom{+}
    \text{
      \mathsurround=0pt 
      \ooalign{
        \noalign{\kern-.50ex}
        \hidewidth$\smash{\cdot}$\hidewidth\cr 
        \noalign{\kern.50ex}
        $-$\cr 
      }%
    }%
  }%
}

\title{Sumterms, Summands, Sumtuples, and Sums and the Meta-arithmetic of Summation\\
}

\author{%
Jan A.\ Bergstra\\
{  Informatics Institute, University of Amsterdam}%
\thanks{email: \texttt{j.a.bergstra@uva.nl, janaldertb@gmail.com}.}
\date{\small{September 18, 2020} }
}

\begin{document}

\maketitle

\begin{abstract} Sumterms are introduced as syntactic entities, and sumtuples are introduced as semantic entities. 
Equipped with these concepts a new description is obtained of the notion of a sum as (the name for) a role 
which can be played by a number. Sumterm splitting 
operators are introduced and it is argued that without further precautions the presence of these operators 
gives rise to instance of the so-called sum splitting paradox. 
A survey of solutions to the sum splitting paradox is given.
\end{abstract}

\newpage
\tableofcontents

\section{Introduction}
I will use the phrase meta-arithmetic to refer to reflections about arithmetic, 
without the connotation, known from
meta-mathematics that such reflections are necessarily supported by or even consisting of mathematical 
work, or mainly consist of formal logic.

With elementary arithmetic I will understand the activity of developing, confirming and disconfirming, proving and disproving, 
open and closed identities over an arithmetical signature.  I will restrict attention to the case of characteristic zero, and in fact
I will understand arithmetic as school arithmetic rather than as its academic (somehow advanced) counterpart.

The central question of this paper is: ``what is a sum?'' Unlike the related question ``what is a fraction'' 
the educational literature leaves this question remarkably untouched. 
I propose to understand sum a a role name, rather than as a noun. in arithmetic, the role of a sum is played by a number.

\subsection{Sum, left summand, and right summand as role names}
\label{Roles}
To the best of my knowledge there is no convincing definition of sums as a collection of entities. 
In this respect sums differ for instance from squares which may, but need not, be understood as a collection of numbers.  Sum is a role name, as in ``$a$ is the sum of $b$ and $c$'', rather than a noun
referring to an entity of a certain type. Moreover, in this case $b$ has the role \emph{left summand}, and $c$ has the role \emph{right summand}. Further $a$ has as well the role  \emph{function value}, and both $a$ and $b$ have the respective 
summand roles and in addition to those role(s) the still weaker  role of (being an) \emph{argument}. 

Objects, concrete as well as abstract, may have different roles at the 
same time, and the same object may have different roles in different contexts. Unlike physical objects one and the same 
number can appear in different contexts at the same time. For instance in $3 = 2 + 1$ and $5 = 2 + 3$, 
two signs which coexist in time, the sub-sign $3$ has different roles.

Given $a <b$, $a$ has the role of the smaller entity in an ordered pair. It is pointless to say that $a$ is small (or smaller).
Indeed $a$ is element of an ordered domain, which, however, is not the same as its  role in $a <b$. Sum, summand, left summand, 
and right summand are roles which are plausible for numbers. Who thinks of sum and summand as roles, thinks of numbers as entities capable of fulfilling such roles. The view that sum is a role instead of a predicate, or a collection denoted by a predicate, therefore comes with some form of realism on numbers, 
because as entities these may fulfil one or more roles.

\subsection{Sum: an abstract parametric proper noun}
I will assume that say $75$ is a proper noun which stands for a sufficiently specific entity. Proper nouns are also called proper names. Alternatively, and less plausible, one may contemplate the notion that $75$ is a common noun (also called generic noun) so that it stands for all entities which somehow qualify as the number $75$. Suppose it is 
agreed that $x$ is a variable which ranges over natural numbers (whatever these are, even if they don't exist after all), one may holds that, within a context aware of said agreement, $x$ is a common noun but not a proper noun. 
Now consider $75+x$. This sign may be considered to denote a parametric proper noun: 
a noun which comes about after obtaining a denotation for a parameter $x$ from an environment. 
In fact $x$ itself may also be considered a parametric
proper noun. While ``natural number'' is a common noun ranging over the natural numbers, 
$x$ names a not yet known, but once known also specific number. 
In logic the context providing an actual number to which $x$ refers is called a valuation.

Sum is not a proper noun (which must refer to a specific entity, also called a generic noun), as it does not refer to any specific entity. But sum is not a common noun (ranging over some  class of entities, alternatively called a generic noun) either. I will qualify sum as a parametric proper noun: in the context of two summands sum is a proper noun with the summands as parameters. The summands act as parameters to be taken from a context 
which allow ``sum'' to refer to a specific value.

Roles which are played by unique entities once key paramaters from a context have been determined may be considered parametric proper nouns.

Nouns may also be qualified in terms of concreteness, where concrete nouns range over collections of physical entities, 
and where abstract nouns range over non-physical entities. So ``natural number'' may be considered an 
abstract common noun, and sum may be considered an abstract parametric proper noun. The sign $25$ is a
physical entity, which may be uniquely determined in a given context, say on a piece of paper. 
Then ``look at the $25$ in the middle of the page'' contains $25$ in the quality of a singular 
concrete proper noun. In ``the $25$'s in this paper should be rendered boldface'' $25$ is embedded in a a plural noun.
As it turns out 
the typing of grammatical units as signs (i.e. entities), nouns, proper nouns, common nouns, 
abstract nouns, concrete nouns etc. is non-trivial and may give rise to polymorphism.

A distinction between collective nouns and singular nouns may be made, besides and independently of, 
 a distinction between plural nouns and singular nouns. Then ``the arguments of function $f(-,-,-)$'' and ``the values of function $g(-,-,-)$'' are collective nouns. 
Plural nouns are nouns which occur in plural form, for instance the 
occurrence of  \emph{times} in ``$3$ is three \emph{times} as large as $1$''. Plural noun is a linguistic notion without
manifest logical content.

I summarize the above considerations regarding sums with the following claim.
\begin{claim} (Scepticism on sums) 
There is no convincing and generally accepted definition of the concept of a sum in elementary arithmetic. 
The notion of a sum is too ambiguous for it to be used as a common noun or as a noun.
\end{claim}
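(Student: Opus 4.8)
The plan is to argue the claim in two stages, matching its two sentences: first, that no extant explanation of ``sum'' is simultaneously convincing and generally accepted; second, that the residual ambiguity is \emph{structural}, so that ``sum'' cannot felicitously serve as a common noun or as a noun at all. For the first stage I would fix adequacy criteria that a purported definition of ``the sum of $b$ and $c$'' must meet: (i) non-circularity, i.e.\ it must not presuppose the addition operation it is meant to explicate; (ii) respect for the type distinctions already drawn in Section~\ref{Roles}, so that a sum is a \emph{number} and not a syntactic object nor an ordered pair of summands; and (iii) stability across the contexts in which the word is actually used in school arithmetic, including contexts with several coexisting occurrences of the same numeral in different roles, as in the $3$ of $3=2+1$ versus the $3$ of $5=2+3$.

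I would then run through the natural candidates and show each fails at least one criterion. ``The sum is the value of $b+c$'' fails (i), defining sum via addition. ``The sum is the term $b+c$,'' or an equivalence class of such terms, fails (ii), conflating an abstract number with a sumterm --- precisely the confusion the later development of sumterms and sumtuples is designed to expose. ``The sum is the pair $\langle b,c\rangle$ together with its image under addition'' again fails (ii), collapsing sum into sumtuple. A definition by closure conditions (``the sums are the least set containing the numerals and closed under $+$'') fails (ii) and (iii) at once: it makes every number a sum and thereby empties the predicate of discriminating content. Against the ``generally accepted'' conjunct I would invoke the observation already recorded in the Introduction --- that, unlike the parallel question for fractions, the educational literature simply does not supply such a definition --- so there is nothing in the received canon available to be accepted.

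For the second stage I would consolidate these failures into a single diagnosis: every repair either trivialises the predicate (every number turns out to be a sum) or covertly reintroduces the summands, i.e.\ it succeeds only relative to a choice of $b$ and $c$ --- which is exactly the mark of a parametric proper noun rather than a common noun. I would reinforce this by pointing forward to the sum splitting paradox: treating ``sum'' as a noun denoting a fixed entity is what licenses the paradoxical instances surveyed later in the paper, so the consistency pressure from that paradox is independent evidence that the nominal reading is untenable. The stated conclusion then follows.

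The main obstacle, as with any negative existential, is that one cannot literally enumerate every conceivable definition; the argument must instead constrain the \emph{form} of an adequate definition and show the constraints (i)--(iii) are jointly unsatisfiable except by a parametric construction. Making that dilemma genuinely exhaustive, rather than a mere catalogue of obvious attempts, is the delicate point, and I would handle it at the level of roles: anything that pins down a unique number must do so \emph{from} the summands --- otherwise it is not \emph{the} sum of \emph{those} summands --- while anything that does not is either a syntactic surrogate or the entire number domain, and neither of those is a sum.
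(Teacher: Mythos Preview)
The paper does not supply a proof of this claim in the usual sense: the claim is explicitly introduced as a \emph{summary} of the informal considerations in Sections~\ref{Roles} and the paragraph on ``Sum: an abstract parametric proper noun'' that immediately precede it. The paper's argument consists of (a) the observation that, unlike ``square'', there is no convincing definition of sums as a collection of entities; (b) the grammatical analysis classifying ``sum'' as neither a proper noun nor a common noun but an abstract parametric proper noun; and (c) the role-based picture in which the same numeral can occupy different roles in different signs. Your proposal is considerably more systematic --- you set up explicit adequacy criteria and run a case analysis over candidate definitions --- and in that sense goes well beyond what the paper actually does.

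One point of divergence is worth flagging. Your criterion (i), non-circularity, is not something the paper relies on, and indeed the paper later adopts precisely the definition you reject on that ground: Definition~\ref{sumDef} takes ``the sum of $p$ and $q$'' to be the value of $(p)+(q)$, and the objection raised there is not circularity but triviality --- since $(p)+(0)=p$, every number is a sum, so the definition ``does not lead to a non-trivial notion of sum (as a proper noun)''. You do capture this triviality objection separately under your closure-conditions candidate, but attributing an additional circularity defect to the value-of-$b+c$ definition overshoots the paper's position. If you drop criterion (i) and let the triviality argument carry that case, your reconstruction aligns well with the paper's intent while remaining more explicit about the underlying dilemma (trivialise the predicate versus smuggle in the summands) than the paper itself is.
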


\subsection{Sumtuples}
A sumtuple is an ordered triple $(a,b;c)$ such that $c$ is the sum of $a$ and $b$. So a sumtuple, in this case
also called a sumtriple, contains a sum, and in somewhat less accurate language it has a sum. 
The sum of a sumtuple is simply its third component.
Once the terminology of sumtuples has been coined one may imagine a distinction between valid sumtuples and invalid sumtuples. For a valid sumtuple $(a,b;c)$ it is the case that $a+ b = c$, and for an invalid sumtuple that is not the case. In some cases  a more detailed notation for sumtuples may be useful, e.g. $+(a,b;c)$ or $\mathsf{plus}(a,b;c)$.

The pair $(a,b)$ of both summands of a sumtuple is important as well and I will refer to such a pair as 
a sumterm or as a plusterm. Below we will only use sumterm. 
Different notations for sumterms are useful, e.g. $+(a,b)$ or simply $a+b$. It is sumterms rather than sums 
which are equipped with two summands. Sumterms have a sum as well, not in the sense of containing a component which has the role of a sum, but in the sense of  allowing a unique completion with a sum (a number with role sum) to a valid sumtuple. As it tuns out ``having a sum'' is ambiguous (or rather polymorphic), it means something different for 
sumterms and for sumtuples.

Conceiving a function as a graph its tuples contain result values, whereas upon viewing a function as a 
prescription there is no notion of containment of a result. 
Having a value is a different matter for different conceptions of function, and the  mentioned
divergence of interpretation for having a sum correlates with different ways of understanding addition.

\subsection{Looking for more detail}
My objective in this paper is to sharpen the picture of sums which was sketched in the preceding Paragraphs. 
Various questions immediately arise:
(i) what is a number, in other words which entities play the roles of sum and of the respective summand, 
(ii) what is the relation between syntax and semantics, 
(iii) is any of the two, numbers or expressions for numbers, more fundamental than the other, 
(iv) why not do without expressions (sumterms), 
(v) why not do without 
sumtuples, 
(vi) what is the official academic perspective on these matters, if any, 
(vii) what happens if one insists that a sum is a number with two components, a left summand and a right summand, 
(viii) what is the status of sums of more than two numbers (are these roles just as well, or do nested expressions, or abbreviations thereof,  enter the picture at this stage)?
\section{Sumterms a kind of arithmetical quantities}
 Sumterm is a noun ranging over expressions with addition as the leading function symbol. 
 Using the notion of a sumterm requires acceptance of a distinction between syntax and semantics. 
 I   will refer to an expression or term as an arithmetical quantity (abbreviated to AQ below). 
 \begin{definition} 
 \label{SumtermDef} A sumterm is an AQ with addition (usually written $\_+\_$) as its leading function symbol.
 \end{definition}
 With this definition comes some level of abstraction: $2+3,  \mathsf{plus}(2,3), +(2,3)$ each present the same sumterm, 
 in the sense of Definition~\ref{SumtermDef}. Differences in typography and formatting differences 
 are considered inessential. Sumterms are not signs but may be abstractions (interpretations, denotations, meanings) 
  of signs.
 
 \begin{definition}

 For a sumterm $p + q$ the AQ $p$ is its left (or first) summand and the 
 AQ $q$ is its right (or second) summand.
 \end{definition}
Sumterms have a sum, which is another word for value in the special case of sumterms. 
As was already stated above, unlike sumtuples, however, sumterms do not contain a sum.

 One may think of natural numbers as presenting straightforward context in 
 which to contrast syntax and semantics.  Remarkably, however, 
 maintaining a convincing distinction between syntax for naturals and semantics of naturals  is challenging. 
 
 I assume that
 it is plausible to see some form of syntax at work for an agent who makes sense of the sign $2+ (5 + ((-3) +0))$. It
 is less obvious, however, what may count as semantics in relation to such kind of signs.  
  The notion of an AQ comes with a syntactic bias just as the notion of a number comes with a semantic bias. 
  Both for numbers and for AQs,   however,  providing  an unambiguous definitions seems to be impossible. 
    
  AQs are abstract entities, not tangible signs. 
  AQs may serve as interpretations of signs, 
  and more specifically of arithmetical signs, 
  amenable to being conceived of as abstract entities. 
  Conversely signs may represent AQs or parts thereof 
  in a human readable manner.
 A sign may denote an AQ, but the AQ does not inherit attributes like, size, place, time, color, authorship, etc. form that sign.
  I will assume that meta-arithmetic takes  for granted the existence of AQs which can be imagined as abstract entities, 
  in advance of any detailed understanding of arithmetic. 
  
  Semantics enters the picture via the assumption that for AQs $p$ and $q$, the identity (equation) 
  $p=q$ expresses the notion that $p$ and $q$ have  the same meaning (or equivalently: denote the same entity). 
  Semantics   concerns questions about the nature of meanings for various classes of AQs?
 Semantics has two aspects: ontology: what are semantic entities, and denotation: 
 which semantic entity is assigned to a given AQ. For AQ $p$ the latter 
 entity is denoted with $\llbracket p \rrbracket$, which is itself a notation from meta-arithmetic, unlikely to show 
 up in arithmetic proper.
 
 \begin{definition} A poly-infix sumterm is an AQ of the form $t_1+ \ldots + t_n$ for natural $n >2$. 
 \end{definition}
 Poly-infix sums of naturals require naturals to be known for indices. It is possible to introduce poly-infix 
 operations for each arity one at a time, so that in some stage only a limited number of 
 such operations have been defined and no appeal is made to an infinite totality of natural numbers.
 
  \begin{definition}
 Let $k,n$ be decimal natural numbers (see~\ref{DecNats} below)  with $k \leq n$. Then
 
 (i)  length of the poly-infix sumterm $t_1+ \ldots + t_n$ is $n$, and 
 
 (ii)  the $k$-th summand (also denoted $\mathsf{smnd}_k(p)$)  of the poly-infix sumterm $p \equiv t_1+ \ldots + t_n$,  is $t_k$.
 \end{definition}
 For $n=2$, a poly-infix sumterm is just a sumterm, with $\mathsf{smnd}_1(p)$ as its left summand, and 
 $\mathsf{smnd}_2(p)$ as its right summand. 
 The sums of poly-infix sumterms of different arities are related as follows: \\
 $t_1+ \ldots t_{n-1}+ t_n + t_{n+1}= t_1+ \ldots +  t_{n-1}+(t_n + t_{n+1})$.

Adopting the presence of poly-infix sumterms can be contrasted with adopting the view that say $t_1+ \ldots + t_4$ abbreviates $((t_1+ t_2) + t_3) + t_n$. By using poly-infix sumterms the use of inductively defined syntax can be 
delayed or even avoided. 

The introduction of poly-infix sumterms suggests the introduction of abbreviations as follows: e.g. 
$t_1- t_2 -t_3 + t_4$ abbreviates the sumterm $t_1+(- t_2) +  (-t_3) + t_4$. 

Poly-infix sumterms arise in the teaching of arithmetic if for instance AQs like $3 + 7 + 9$ appear without either a
 description of a convention on how to view the expression as an abbreviation of another expression 
 which involves additional brackets, or alternatively an explanation on when brackets may be left out, detailing the role of assiciativity for that matter.

 \begin{definition}
 \label{sumDef}
 The sum of AQs $p$ and $q$ is the value of the sumterm $(p)+(q)$.
 \end{definition}
 For instance: the sum of $p\equiv 1+2$ and $q \equiv 3+4$ is the value of $(1+2) + (3+4)$.
 In view of the fact that $(p) + (0) = (p) + 0  = p$, all numbers are such values, 
 and for that reason Definition~\ref{sumDef} does not lead to a non-trivial notion of sum (as a proper noun). 
 This situation may be contrasted 
 with squares which can be usefully defined, among integers, as the members of the range of the squaring operation.

 Given an equation $p = Q$ with $Q$ a sumterm or a poly-infix sumterm it may be said that $p$ is written as a sum. 
 Thus ``written as a sum'' applies to AQs and abbreviates ``written as a sumterm or written as a poly-infix sumterm''.

 \subsection{Semantics of arithmetical AQs}
 I will discuss various ways in which one may imagine natural numbers as abstract entities. 
 Remarkably the seemingly elementary question ``what is a natural number'' 
 has not received a definitive answer in mathematics, in logic, or in philosophy. 
 The conventional understanding of natural numbers as finite ordinals in ZF set theory, 
 thus following the classic proposals of J. von Neumann,  
 seems not to provide an intuitive understanding of natural numbers 
 which is both convincing and  is usable at an elementary level at the same time.

 In~\ref{SurveySem} below I will survey different options for the ontology of natural numbers and 
 in addition I will motivate the choice just presented:
 in terms of ontology, natural numbers are AQs and as a semantics producing operation finding the meaning of an AQ 
 with type natural number is a projection.
 
Rather than leaving the ontology of natural numbers open, 
 I will choose as my preferred semantics of natural numbers a collection of 
 arithmetical quantities, the so-called decimal naturals (see Paragraph~\ref{DecNats} below). The semantic function $\lambda t. \llbracket t \rrbracket$ is a 
 projection (for each AQ $t$ it is the case that $\llbracket  \llbracket t \rrbracket \rrbracket =  \llbracket t \rrbracket $).
  For example: $\llbracket \llbracket 17 + (-1) \rrbracket \rrbracket =  \llbracket 16 \rrbracket = 16$.

\subsection{Arithmetical quantities, substitution and let-constructs}
One may speak of an arithmetical quantity $t$ without having a precise definition of the relevant syntax at hand. 
So $1+2$ can be labeled an AQ even if (that is at a stage where) there is no
answer to the questions like: (i) is $(1) +2$ an AQ, (ii) is $((1)) +2$ an AQ , (iii) is $(1 +2)$ an AQ, and (iv) is $+(2,3)+4$ an AQ? 
Below it will be assumed that these questions are given an affirmative answer each, but that need not be the case.

On AQ there is an equality $\isaq$, abstracting from redundant bracketing and spacing, 
including new lines and new pages. For instance $0 \isaq (0) \isaq ((0))$ and
$1+ 2 \isaq 1 + (2) \isaq (1+2) \isaq ((1) + 2)$ while $1+2 \not \isaq 2+1$ and$1+2 +5 \not \isaq (1+2) + 5$. 
Substitution on AQs 
binds stronger than addition and it will introduce additional brackets as follows: $[t/x]P[X] \isaq P[(t)]$. For instance $[1+2/X] (3+X) \isaq (3 + (1+2)) \isaq 3 + (1+2)$,
while $[1+2/X] 3+X \isaq (3 + X) \isaq 3 + X$. The let construct, however, 
works differently, 
by not introducing any additional brackets: 
$\mathsf{\,let\,} x \isaq 1+2 \mathsf{\,in\,} (3 + x) \isaq (3 + 1 + 2) \isaq 3+ 1 + 2$. 
Substitution and the let construct are connected as follows: $[t/x] A \isaq \mathsf{\,let\,} x \isaq (t) \mathsf{\,in\,} A$.

In general, for closed AQs $X$ and sumterms $Y$, $X \isaq Y \implies X=Y$ but not the other way around.

\subsection{Balance principle for arithmetical quantities}
Working with AQs amounts to the adoption of some form of syntax. However, adopting syntax without 
meaning is problematic for arithmetic. There must be a balance between syntactic considerations and semantic considerations. If $X$ is considered an expression, or an AQ, the question arises what kind of entity is denoted by $X$.
I will assume  the following principle.
\begin{principle} (Balance principle for syntax and semantics.) 
Adopting AQs or any other form of syntactic entities, 
brings with it a complementary need to adopt semantic considerations.
\end{principle}
Given the balance principle one knows that any explanation of arithmetic 
making use of AQs will at the same time make use of some 
notions of natural number and integer number, 
which refer to the respective collections of semantic entities.

\begin{claim} 
\label{SPCss}
(Skewed plurality claim for syntax and semantics.) 
Once a contrast between syntax and semantics is adopted for 
elementary arithmetic it is plausible to acknowledge that (i) there is a plurality for both, 
while (ii)  the divergence of options and opinions regarding semantics of numbers exceeds the divergence of views 
regarding notations for numbers. 
\end{claim}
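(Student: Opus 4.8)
The claim is discursive rather than formal, so the ``proof'' I envisage is a structured argument resting on an explicit, if stipulative, comparison criterion; the hedge ``it is plausible to acknowledge'' already signals that nothing stronger than this is on offer. The plan is to dispatch (i) by exhibiting the two pluralities, then to dispatch (ii) by fixing a measure of divergence under which the asserted inequality becomes checkable.

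First I would establish plurality of syntax by collecting the notational variants already visible in the present text: the interchangeable styles $2+3$, $\mathsf{plus}(2,3)$, $+(2,3)$; poly-infix sumterms versus their bracketed inductive expansions; the several $\isaq$-normal forms produced by redundant bracketing and spacing; the abbreviation conventions turning $t_1 - t_2 - t_3 + t_4$ into a sumterm; and, reaching beyond this paper, the decimal, binary, unary, and Roman notations for the naturals themselves. The point to extract is not merely that many notations exist but that the community uniformly treats the choice among them as inessential --- exactly the abstraction performed here when ``differences in typography and formatting differences are considered inessential''. For plurality of semantics I would enumerate the competing ontologies of natural number: von Neumann finite ordinals in ZF, Zermelo's alternative ordinals, Frege--Russell cardinals as classes of equinumerous sets, Dedekind/Peano structuralism, the formalist identification of numbers with numerals, intuitionistic constructions, and the AQ-with-decimal-naturals proposal surveyed in~\ref{SurveySem} using the decimal naturals of~\ref{DecNats}. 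Here I would lean on the paper's own concession that the question ``what is a natural number'' has received no definitive answer in mathematics, logic, or philosophy, and on the Scepticism-on-sums claim, to certify that these positions genuinely coexist rather than being folklore curiosities.

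For (ii) I would fix a criterion: measure the divergence of a family of positions by the number of pairwise \emph{irreconcilable} members it contains, where two positions count as reconcilable when some effective dictionary translates the one into the other while preserving all arithmetical truths. Under this criterion the syntactic family nearly collapses to a point: positional notations are mutually computably inter-translatable, $\isaq$ together with the ``inessential differences'' stance quotients away the residue, and the few genuine remaining choices (e.g.\ poly-infix versus binary nesting) are themselves linked by the associativity identity displayed above. The semantic family does not collapse: a formalist and a Fregean disagree not about how to \emph{write} a number but about what a number \emph{is}, and no dictionary reconciles ``$3$ is the class of all three-membered sets'' with ``$3$ is a position in every $\omega$-sequence'' with ``$3$ is the decimal natural $\llbracket 3 \rrbracket$''. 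Hence the irreconcilable count on the semantic side is strictly larger, which is the asserted inequality.

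The main obstacle is precisely that ``exceeds'' compares two informal magnitudes, so the argument stands or falls with the chosen criterion. I would therefore (a) state the irreconcilability criterion explicitly, (b) verify the inequality under it, and (c) argue robustness: the conclusion survives the obvious reasonable variants --- counting schools with serious contemporary adherents, counting positions making incompatible existence claims, or weighting by how much elementary practice the choice leaves untouched. I would simultaneously flag the escape route a sceptic retains: someone who instead counts notational micro-conventions, or who deems every ontological dispute merely verbal, can deny the inequality --- which is why the statement is offered as a plausible acknowledgement rather than as a theorem.
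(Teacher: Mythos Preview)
Your reading of the claim's status is right: the paper offers no concentrated proof of Claim~\ref{SPCss} at all. Immediately after stating it the author writes only that ``I will provide some arguments for Claim~\ref{SPCss} below'', and what follows is the extended survey of Section~\ref{SurveySem} (ordinal, cardinal, Peano, and decimal naturals; several constructions of integers on top of each), together with the later Claims on syntax pluralism and open-endedness. The intended argument is cumulative and implicit: the reader is shown many mutually non-reducible semantic proposals, reminded that ``the seemingly elementary question `what is a natural number' has not received a definitive answer'', and left to contrast this with the comparatively tame notational variation already dismissed as ``inessential''. No comparison criterion is ever articulated.

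Your proposal is therefore not the same approach but a sharpening of it. You draw on the same evidence base --- the paper's own catalogue of ontologies and its $\isaq$-quotienting of notation --- but you add what the paper omits: an explicit yardstick (pairwise irreconcilability under truth-preserving translation) against which ``exceeds'' becomes checkable, plus a robustness discussion and an honest statement of the escape route. That buys you something the paper does not attempt, namely a falsifiable formulation of (ii); the cost is that your conclusion now visibly depends on a stipulated criterion, whereas the paper's looser survey style lets the asymmetry emerge as an impression rather than a claim tied to one measure. Both are adequate to the hedge ``it is plausible to acknowledge''; yours is the more disciplined of the two.
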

The skewed plurality phenomenon indicates that uniformity and consensus is skewed 
in the direction of syntax, form, and notation, with high uniformity and consensus on such matters 
in comparison to consensus on meaning. I will provide some arguments for Claim~\ref{SPCss} below.
%

\subsection{Three levels of elementary arithmetical competence}
 I will assume that an individual $P$ upon being taught arithmetic
acquires what I will call  competence of elementary arithmetic (CoEA). 
Such competence is probably best measured as some
partial ordering. Instead I will distinguish three levels of CoEA. Being easily able to answer standard questions 
at elementary school level amounts to basic CoEA, 
whereas the competence level of a teacher is denoted as professional CoEA and the level of a 
professional mathematician is labeled as academic CoEA.

\section{Semantic options and issues for naturals and integers}
\label{SurveySem}
Ontology of arithmetic may be based on various preconceptions, to mention:
\begin{itemize}
\item Natural numbers can be considered ordinals as well as cardinals, and both views suggest different semantic options.
\item Natural numbers, if there are supposed to exist, are interpretations of suitable AQs and therefore the ontology of natural numbers may also be 
derived from syntax if syntax is supposed to exist in advance of arithmetical semantic considerations.

\item Naturals are integers. In mathematics the informal 
use of language suggests that the natural numbers are subset of the integers. 
Although the assertion ``$3$ is an integer?'' is plausibly confirmed, strictly speaking most
conventional definitions of naturals and integers do not support this assertion. 

\item It is plausible to view the collection of natural numbers as a set, with as a consequence upon adopting ZF set theory that particular natural numbers are sets as well. However, once taking ZF as the technical foundation, any subsequent choice for a set 
$N$ of naturals is quite arbitrary, and labeling a particular set as a natural number 
(on the grounds of membership of $N$) can only be based on an arbitrary convention.

\item One may hold that natural numbers can and should be defined without a simultaneous introduction of the core operations of addition and multiplication, the introduction of which takes place at a later stage. And one may 
alternatively hold the converse, claiming that without operations there is no point in viewing any objects as numbers, i.e. that natural numbers must come with an arithmetical datatype.
\item Contemplating the natural numbers as a collection is an option. 
Collection is a form of aggregate which is more general than set and class. 
The natural numbers may als be chosen as urelements for a ZF style set theory.
\end{itemize}

\subsection{Ordinal natural numbers (finite ordinals in ZF)}
A well-known construction due to J. V. von Neumann~(\cite{Neumann1922}) determines the natural numbers (i.e. the elements of the set $N$
which itself is an ordinal as well) as the finite elements of the class of ordinals.

Now following the 
observations of Paul Benacerraf in~\cite{Benacerraf1965}  numbers of sort $nat$ 
are not intrinsically unique abstract mathematical entities,
at least not in ZF style set theory and under the assumption that $N$ is a set, 
because, in spite of the technical efficiency of the well-known encoding as finite ordinals, 
there is no unique canonical representation of ordinal numbers in ZF.

The von Neumann encoding of ordinals provides finite ordinals as a very plausible structure for 
naturals but that design is not unique, and there is no compelling reason to adopt that particular design so that there is no compelling reason to view the finite ordinals thus defined as the true natural numbers. 
%
I will refer to naturals thus defined as the ordinal natural numbers, notation $N_{ord}$, with the understanding that ``ordinal'' will be omitted on most occasions. 
\subsection{Cardinal natural numbers (finite abstract cardinals outside ZF)}
For cardinal numbers one may distinguish concrete cardinal numbers, that is ordinals which are cardinals at the same time, 
and abstract cardinals, i.e. the class of all sets with the same number of elements. Thinking in terms of concrete cardinals leads to the same natural numbers as thinking in terms of ordinals. Considering abstract ordinals that changes.

Working in ZF set theory the sets with cardinality $17$ constitute a proper class
say $C_{\#17}$. Let $\omega$ denote the set of finite ordinals in ZF, and let $\equiv_\#$ denote equicardinality on sets. 
Then $C_{\#17} = \{x \mid x \equiv_\# \underline{17}\}$ with $\underline{17}$ the 
$17$th ordinal in $\omega$. 
I will write $[A,B,C,\ldots ]$ for a collection the elements of which are classes and may be proper classes.
Now $N_{card} =[C_{\#0}$, $C_{\#1}$, $C_{\#2}, ...]$ is a collection of 
classes which itself is not a class, because its elements are proper classes. 
I will refer to a collection of classes as a meta-class; meta-class is a special case of collection. 

The meta-class $N_{card}$ cannot be found in ZF set theory but it exists in variations thereof, such as
Ackermann's theory of sets and classes (see e.g.~\cite{Muller2001} for a discussion of these matters). 
One may write
 $$N_{card} = [X \in \mathsf{Class} \mid \exists n \in N_{ord} .(X = \{x \in  \mathsf{Set} | x \equiv_\# n\})]$$
 where $X$ ranges over all classes and $x$ ranges over all sets. 
 Forgetting that the definition of $N_{card}$ uses $N_{ord}$ one may take the extension of $N_{card}$ as the more fundamental notion which is available for conceptualising naturals.
 However obvious the idea of $N_{card}$ may be, when contemplating finite cardinals from first principles, 
 there is not even a standard notation for it as the use of $[..]$ is merely an ad hoc proposal for this occasion, 
and it lies outside the comfort zone of ZF based academic mathematics. Nevertheless  $N_{card} $ 
constitutes a credible option for an ontology of natural numbers.

\subsection{Peano's natural numbers}
A classical idea due to Peano is to have a successor function $S$ in mind and to identify the sequence of naturals as the sequence: $0,S(0), S(S(0)), S(S(S(0))),\ldots$. Now $17$ is a notation for a natural number rather than a number itself.

As a conceptual idea the main source of lack of uniqueness of number thus conceived comes from the choice of a name
for the successor function. This price must be paid if one prefers not to consider naturals as a collection which is 
given (and thereby unique) as a a primitive set in advance. The Peano representation is technically very attractive,
and has found many applications in theoretical computer science. This approach and 
can be used to provide the semantic foundation of elementary arithmetic without taking all of ZF on board. 
One finds e.g. $ \llbracket 3 \rrbracket = S(S(S(0)))$. One may view ``$3$'' as belonging to syntax and ``$S(S(S(0)))$'' 
as a semantic entity. I will refer to these natural numbers as Peano's natural numbers and I will refer to Peano's natural numbers as $N_{peano}$.

\subsection{Decimal natural numbers: projection semantics}
\label{EclNat}
An interpretation of $\Nat$ which is workable for basic CoEA as well as for professional CoEA 
 identifies the natural numbers simply as non-empty digit sequences. So it is assumed that
 $N_d \subseteq $ AQ, $N_d \subseteq $ AQ, and $N_d \subseteq $ AQ.
 A naive set theory is adopted which does not give rise to the idea that digit sequences should be encoded
in a setting of lower level primitives, and therefore does not come with a sense of arbitrariness of a specific encoding of these.
With this interpretation semantics of natural numbers becomes a projection of AQs.

The elements of $Z_d$ may be termed decimal natural numbers in order not to claim that a fully general treatment of 
natural numbers has been obtained and suggesting instead that no such fully general treatment exists.

Thus a single structure and domain for the interpretation of the sort $nat$ is chosen. 
Insisting on ZF foundations the pluralistic nature of this approach is undeniable because encoding digit sequences in ZF can be doe in many ways, none of which can claim a foundational significance in excess of other encodings.

I will adopt (this particular form of) projection semantics for naturals and integers as a preferred option, and I consider this choice to be justified by a combination of its intuitive appeal 
with the absence of any other known option for the semantics of natural 
(and integers) which is  conceptually more convincing. 

\begin{definition} 
\label{DecNats}(Decimal natural numbers). $N_d^+$ is the 
collection of non-empty sequences of decimal digits starting with a 
non-zero digit as the first symbol. $N_d = \{0\} \cup N^+_d$. $N^+_d$ is understood 
as the collection of positive decimal natural numbers, and $N_d$ is correspondingly understood as the collection of 
decimal naturals.
\end{definition}
 
Naive set theory allows not to be bothered by the details of encoding sequences of digits in ZF set theory. Such details unavoidably come with many alternatives thereby refuting  uniqueness of the concepts in a principled sense. 
The fact that at some stage a confrontation with classical paradoxes 
necessitates more caution is then taken for granted as these paradoxes are unlikely to show up in school arithmetic.

An argument in favour of definition~\ref{DecNats} is that it is as close as possible to the idea that ``$17$ is a natural number''.
Using naive set theory one may claim that all occurrences of the sign $17$ share the underlying non-material meaning of $17$-hood which is codified by Definition~\ref{DecNats}. From the point of view of logic or of computer science
it is a disadvantage of decimal natural numbers that there are infinitely many constants. From a conceptual perspective this disadvantage may considered an advantage instead because it is simpler. 
By taking only a finite collection of constants
into account one obtains useful simplified and finite approximations of arithmetic.

\subsection{Semantic options for integers}
Given the naturals, say $N_{ord}$, $N_{card}$, $N_{peano}$, and $N_{dec}$ different constructions for integers 
can be applied used. I will mention some options without any claim of completeness. 
Below $\alpha$ ranges over $\{ord,card,peano,d\}$.
\begin{enumerate}
	\item Consider the integers as equivalence classes of pairs of naturals (with $(a,b) \equiv (c,d) \iff a+d = b + c$). 
	This leads to $Z_{ord}^{eqc}$, $Z_{card}^{eqc}$, $Z_{peano}^{eqc}$, and $Z_{d}^{eqc}$. 
	As a consequence of these assumptions:  $N_\alpha \cap Z_\alpha = \emptyset$.
	\item As a second option one may understand 
	the integers as equivalence classes of pairs of naturals (with $(a,b) \equiv (c,d) \iff a+d = b + c$) and adapt the 
	naturals to this notation by 
	redefining the natural numbers as the equivalence classes of the form $[(n,0)]_\equiv$ for $n \in N_\alpha$, 
	that is the naturals inside the integers. Write $N_\alpha^{eqc}$ for this set (with $eqc$ for equivalence classes).
	And as a consequence of these modified assumptions: $N^{eqc}_\alpha\subseteq Z^{eqc}_\alpha$. 
	
	
	\item Adopt  and in addition consider the positive naturals 
	 $N^+_\alpha =N_\alpha -\{0\}$ (where the mechanism for deleting $0$ differs for the respective cases)
	adopt $Z^s_\alpha= \{0\} \cup \{0,1\} \times N^+_\alpha$ ($s$ for signed) as 
	definition of the integers 
	on top of the naturals. As a result $N^+_\alpha \subseteq N_\alpha $ and $N^s_\alpha \cap Z^s_\alpha = \{0\}$.

	\item (Decimal integers.) In the case of decimal integers each of the mentioned options may work, 
	but the following is more plausible, and the simplest notation is reserved for this case:
	 $Z_d = \{0\} \cup N_d^+ \cup-N_d^+$. 
	It then follows that as sets: $N^+_d \subseteq N_d \subseteq Z_d$. 
	\end{enumerate}

The plurality of options for the integers constitutes a cascade starting with a plurality of options for natural numbers. 
In no way, however, the survey of options is complete or can be completed.

I notice that decimal rationals may be defined as the decimal integers $Z_d$ extended with the AQs $p/q$ and $- p/q$ where: (i) $p$ and $q$ are nonzero decimal naturals, (ii) $q > 1$, and (iii) $p$ and $q$ have no common divisors beyond $1$; 
with this definition  decimal rationals constitute a subcollecton of AQs while being quite different from so-called decimal fractions. For rationals this leads to the set of numbers $Q_{d}$ for which $N_d \subseteq Z_d \subseteq Q_d$ is valid.
\section{Arithmetical datatypes}
The collections of numbers outlined in the preceding Paragraphs each may be equipped with one or more 
operations and constants taken form: successor ($S(_)$) addition 
 ($\_+\_$), opposite ($\_-$), multiplication ($\_\cdot \_$) and constants $0$ and $1$. 
 Combining a domain with a collection of constants and functions gives rise to the idea of a datatype, 
 and in the present context I wil speak of arithmetical datatypes, thereby loosely indicating properties expected of these constants and functions. For instance (i) $N_d(0,1,+,\cdot)$ is the datatype of decimal naturals 
 (which already contain $0$ and $1$) with addition and multiplication, (ii) $Z^s_{ord}(0,1,+,-,\cdot)$ is the datatype of signed integers based on ordinal naturals, (iii) $N_{peano}(0,1,+)$ is the arithmetical datatype with Peano's naturals and a constant $1$, equipped with addition. (iv) $Z_{card}^{eqc}(0,1,+,-)$ represents integers obtained as equivalence classes over cardinal naturals, with constants $0$ and $1$ and operations addition and opposite. These functions are not sets, and are merely defined as collections of pairs.
 
 \subsection{Isomorphism classes of arithmetical datatypes}
 The structures $N_{ord}(0,1,+),N_{peano}(0,1,+)$, and $N_{d}(,1,+)$ have much in common, in fact:
 $N_{ord}(0,1,+) \cong N_{peano}(0,1,+) \cong N_{d}(0,1,+)$ i.e. the three structures are isomorphic. $N_{card}(0,1,+)$
 is in the same isomorphism class (or rather isomorphism collection)  but a ZF based definition of isomorphism with the other structures cannot be used.
 
 Similarly $Z_{ord}^{eqc}(0,1,+,-) \cong Z_{peano}^{eqc}(0,1,+,-) \cong Z_{d}^{eqc}(0,1,+,-) \cong$\\
 $ Z_{ord}^{s}(0,1,+,-) \cong Z_{peano}^s(0,1,+,-) \cong Z_{d}^s(0,1,+,-) \cong Z_{d}(0,1,+,-)$. An isomorphism class of a 
  datatype is an abstract datatype. I will use the following notation:
  $\Nat(0,1,+)$ is the isomorphism class of $N_{d}(0,1,+)$, $\Int(0,1,+,-)$ is the isomorphism class of 
  $Z_{d}^{s}(0,1,+,-)$, $\Int(0,1,+,-,\cdot)$ is the isomorphism class of 
  $Z_{d}^{s}(0,1,+,-\,cdot)$. Working with arithmetical abstract datatypes removes much of the pluriformity which has 
  been noticed above, leaving the signature, i.e. the listing of constants and operations as the major remaining parameter.

Structuralism (in elementary arithmetic) refers to the viewpoint that isomorphism classes rather than particular structures (arithmetical abstract datatypes rather than arithmetical datatypes) constitute the fundamental ontology of number systems.
\subsection{Pluriformity for naturals and integers}

\begin{proposition} (Objective number scepticism). There is no such thing as a natural number. 
At best there are isomorphism classes of structures for 
natural numbers, assuming the presence of a first element named $0$, a second element named $1$, and a successor function (which may be derived from addition), a number being a mapping from structures in the isomorphism class producing an element of the domain of the argument (that is a selection of an element from each domain) in such a manner that for each structure equally many successor steps are needed to arrive at the selected element 
(this class of numbers allows an inductive definition). 
\end{proposition}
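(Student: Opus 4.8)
The plan is to read the proposition not as a surprising theorem but as the precise packaging of two things already in place: the Benacerraf-style observations of Section~\ref{SurveySem}, and the isomorphism discussion that follows it. First I would record the negative half. We have exhibited $N_{ord}$, $N_{card}$, $N_{peano}$, and $N_d$ as genuinely distinct structures, each resting on an arbitrary choice (the von~Neumann encoding of ordinals, the choice of urelements or the passage to a meta-class for abstract cardinals, the chosen name for the successor function, the chosen encoding of digit sequences). No meta-arithmetical criterion distinguishes one of them, so no single object can canonically bear the title ``the natural numbers'': that is the content of ``there is no such thing as a natural number.'' What is invariant under all these choices is the isomorphism class $\Nat(0,1,+)$ — equivalently the isomorphism class of the reducts $(N,0,S)$, since $+$ determines $S$ by $S(x)=x+1$ and primitive recursion on $S$ determines $+$, so the two isomorphism classes coincide; this signature remark should be stated explicitly because the proposition lets $S$ be ``derived from addition''.

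The one technical fact that makes the positive half work is \emph{rigidity}: every structure $(N,0,S)$ in the class has only the identity automorphism. Indeed an automorphism fixes $0$ and commutes with $S$, hence by induction fixes every element. Consequently, between any two structures $M,M'$ in the isomorphism class there is \emph{exactly one} isomorphism $\iota_{M,M'}$, and these compose coherently: $\iota_{M,M}=\mathrm{id}$ and $\iota_{M',M''}\circ\iota_{M,M'}=\iota_{M,M''}$. I would now define a \emph{number} to be a family $(a_M)_M$, indexed by the structures $M$ of the isomorphism class, with $a_M\in\mathrm{dom}(M)$ and $\iota_{M,M'}(a_M)=a_{M'}$ for all $M,M'$ — a global selection compatible with the (unique) isomorphisms. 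By coherence such a family is determined by its value in any one $M$, so the collection of numbers is in canonical bijection with each domain; equipped with $\underline 0:=(0_M)_M$ and $\mathbf S\bigl((a_M)_M\bigr):=(S_M(a_M))_M$ it is itself a structure in the isomorphism class. The family $(S_M(a_M))_M$ is again coherent because each $\iota_{M,M'}$ is a homomorphism. Finally, every number is reachable from $\underline 0$ by finitely many $\mathbf S$-steps — equivalently, $(a_M)_M$ is a number iff in some (equivalently every) $M$ the element $a_M$ is obtained from $0_M$ by a fixed finite number of $S$-steps, that number being the same across all $M$ precisely because the family is coherent. This is the inductive characterisation, and the ``equally many successor steps'' clause in the statement is just this coherence condition spelled out.

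The main obstacle is foundational bookkeeping rather than mathematics. The isomorphism class is a proper class — and in the $N_{card}$ variant even a meta-class — so ``a mapping from structures in the isomorphism class'' is a class-sized (or larger) object and cannot live in ZF; it must either be handled in an Ackermann-style theory of sets and classes (already invoked for $N_{card}$) or be read schematically, quantifying over structures rather than forming the totality. One should also fix the signature before speaking of ``the'' isomorphism class, and note that $N_{card}(0,1,+)$ sits in the class only in the extended set theory. Modulo this, the argument is short: rigidity supplies the unique connecting isomorphisms, coherence makes the selection well defined and reducible to a single representative, and the $\underline 0$/$\mathbf S$ clauses give the inductive definition, so the collection of numbers is itself a legitimate — and in a precise sense the most neutral — arithmetical datatype in the class.
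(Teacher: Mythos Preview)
The paper gives no proof of this proposition. It is stated as a thesis summarising the preceding survey of semantic options (Paragraphs on $N_{ord}$, $N_{card}$, $N_{peano}$, $N_d$) and is immediately followed by a one-sentence gloss on number realism; the positive clause about numbers-as-mappings is elaborated only later, informally, in Paragraph~\ref{PSC} on abstract pointed arithmetical datatypes. So there is nothing for your proposal to match or diverge from at the level of argument.

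That said, your proposal is correct and does real work the paper leaves implicit. The negative half is exactly the Benacerraf recapitulation the paper intends. For the positive half you supply the one genuine mathematical ingredient the paper never states: rigidity of $(N,0,S)$, hence uniqueness of the connecting isomorphisms, hence well-definedness of the coherent-family notion of number and its reduction to a single representative. The paper's later device of pointed isomorphism classes $\Nat(0,1,\widehat{17},+)$ is morally the same object, but your formulation makes explicit why the selection is canonical rather than arbitrary, and why the ``equally many successor steps'' clause is equivalent to coherence. Your foundational caveat --- that the indexing family is a proper class (or worse, once $N_{card}$ is admitted) and so the construction must live in an Ackermann-style setting or be read schematically --- is precisely the point the paper raises piecemeal in Paragraphs~3.2 and~4.5; it is good that you flag it as the main obstacle, since the paper does not tie those threads together here.
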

Number realism in each of its forms seems to involve choice between a plurality of options. 
Simply adopting number realism, say for $nat$, does not create much clarity in excess of the fact that having done so
a choice for a semantic domain $nat$ is in order.

\subsection{Structural realism}
In response to objective number scepticism one may insist that structures rather than numbers 
exist be it modulo isomorphism. However, mathematical structuralism (realism for mathematical structures, see e.g.~\cite{Muller2010}) based on ZF set theory is pluralistic just as much as number realism is bound to be. 
It is not clear to what extent structuralism can be decoupled from explicit mention of signatures, as the remaining 
source of pluriformity for number systems. Doing away with signatures leads to the following description.
\begin{proposition} 
\label{structuralRealism}
(Structural and pluralistic realism for $\Nat$). Assuming ZF set theory as a basis it is plausible to support pluralistic objective structural number realism for $\Nat$: structures for numbers of sort $\Nat$ exist and are unique up to isomorphism.
With that understanding of say $\Nat$ an assertion like $2+2 = 1 + 3$, expresses a fact about a chosen structure for 
$\Nat$, and indeed about all isomorphic structures for $\Nat$, an assertion which is acceptable for all agents who 
share the chosen structure (or structures).
\end{proposition}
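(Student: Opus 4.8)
The plan is to argue the proposition in three stages that track its three assertions: first, that at least one structure for $\Nat$ is available inside ZF; second, that any two such structures are isomorphic (Dedekind categoricity); third, that equations between closed terms over the signature $(0,1,+)$ are isomorphism invariant, so that $2+2 = 1+3$ is a fact about the whole isomorphism class rather than about any particular set. The statement about agents will then be read off as a corollary: the truth value such an agent assigns to $2+2=1+3$ depends only on the isomorphism class the agents share, so agreement on the class (or on isomorphic representatives of it) forces agreement on the identity.

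For the first stage I would take $N_{ord}(0,1,+)$, the von Neumann finite ordinals with ordinal addition and the constants $\emptyset$ and $\{\emptyset\}$; its existence in ZF is standard, since the axiom of infinity supplies $\omega$ and transfinite (in fact finite) recursion supplies $+$. I would remark that $N_d(0,1,+)$ serves equally well once digit sequences are given any of the available ZF encodings, and that this very absence of a distinguished encoding is precisely the \emph{pluralistic} ingredient of the proposition: it does not claim that there is a canonical such structure, only that the supply is non-empty and — by the next stage — closed under isomorphism in a way that makes the isomorphism class well defined. This connects the present argument directly to the pluriformity already catalogued in Section~\ref{SurveySem}.

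For the second stage I would recall Dedekind's theorem: any two structures satisfying the second-order Dedekind--Peano axioms for $(0,S)$ are isomorphic, and addition is uniquely fixed by primitive recursion on top of $(0,S)$, so any two "structures for $\Nat$" in the sense of the paper are isomorphic as $(0,1,+)$-structures. For the third stage I would use the routine fact that a homomorphism of $(0,1,+)$-structures commutes with the interpretation of every closed term; hence an isomorphism carries $\llbracket t\rrbracket$ to the corresponding element for each closed $t$ and validates exactly the same closed equations. Applying this with $t_1 \equiv 2+2$ and $t_2 \equiv 1+3$ shows that $\llbracket 2+2\rrbracket = \llbracket 1+3\rrbracket$ holds in one structure precisely when it holds in all isomorphic ones, which is the asserted "fact about a chosen structure, and indeed about all isomorphic structures," and from which the acceptability clause follows immediately.

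The main obstacle is not mathematical — the three ingredients above are classical — but foundational and rhetorical: the proposition asserts that a certain realist stance is \emph{plausible}, and a proof in the strict sense is not available for a claim of plausibility. What the argument actually delivers is the conditional content: \emph{if} one is willing to locate arithmetical facts at the level of isomorphism classes, then ZF provides such classes, categoricity makes them well defined, and the familiar closed identities are insensitive to the choice of representative. I would therefore present this "proof" as a justification that structural and pluralistic realism for $\Nat$ is the weakest realism consistent with both the ZF background and the documented pluriformity, rather than as a derivation of the realist thesis itself.
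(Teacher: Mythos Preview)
Your proposal is mathematically sound, but you should know that the paper itself offers no proof of this proposition at all. In the paper the proposition is stated and then immediately followed by a single sentence of commentary (``Structural and pluralistic realism suggests to elevate arithmetical abstract datatypes to the level of first class semantic entities\ldots''); there is no proof environment, and the proposition functions as a philosophical position statement rather than as a theorem to be derived. Your own final paragraph correctly diagnoses this: the claim is one of \emph{plausibility}, and a strict proof is not what is being asked for.

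What you have supplied --- existence of a ZF model via $N_{ord}(0,1,+)$, Dedekind categoricity for uniqueness up to isomorphism, and preservation of closed identities under isomorphism --- is a substantive mathematical underpinning that the paper simply omits. In that sense your attempt is not so much a different route as the only route on offer; the paper merely asserts the position and moves on. Your three-stage decomposition is the standard justification and each stage is correct, so if anything you have exceeded what the paper provides rather than deviated from it.
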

Structural and pluralistic realism suggests to elevate arithmetical abstract datatypes to the level of first class semantic entities, rather than arithmetical datatypes proper.
%
\subsection{Abstract pointed arithmetical datatypes}
\label{PSC}
$\Nat(0,1, +)$ consists of all infinite commutative and associative semigroups in additive notation
 which are generated by $1$. Now the semantics ($\llbracket 17 \rrbracket$) of the number 
 denoted by $17$ (assuming that one wishes to view $17$ as denoting a numer, rather than as a number itself) may be taken to be the 
mapping (univalent relation) which assigns to a structure in $\Nat(0,1, +)$ the  $17$-th element of its domain. 
This kind of mapping can be formalised in terms of abstract pointed structures.
$\Nat(0,1, \widehat{17},+)$ as the isomorphism class of $N_d(0,1, \widehat{17},+)$ with $\widehat{17} $ interpreted as $17$. Here $N_d(0,1, \widehat{17},+)$ is a so-called pointed structure, in this case with a constant $\widehat{17} $ pointing to $17$.

Using abstract pointed arithmetical datatypes as the foundation on an ontology leads to the following ``definition'' of 
the natural number $17$: $\llbracket 17 \rrbracket_{apadt} = \Nat(0,1, \widehat{17},+)$.

\subsection{Beyond ZF ste theory  1}
While one might think of the naturals as a paradigmatic example of an infinite set, 
and a proof of concept for the very idea of infinite sets, it is not. It is the virtue of arithmetical datatypes to 
sets as domains. By changing one's set theory, for instance by adopting 
Ackermann's set theory as mentioned above collection of numbers may be turned into a class,
 though not into a set.

\subsection{Beyond ZF set theory 2: HTT}
One may be dissatisfied with the idea that pluralistic structural realism (or its meta-class version) is the final word on what a natural number is. Beyond such ideas modern mathematics has a lot to say, however. In the 90's of the 20th century Voudvovsky has initiated a novel approach to the foundations of mathematics which combines type theory with dependent types (thereby following de Bruijn and Martin-L\"{o}f), proof checking based on type theory (e.g. following Coq), constructive logic (Heyting) and mathematics
(Brouwer), category theory, 
and homotopy theory. According to~\cite{Awodey2014} so-called homotopy type theory (HTT) overcomes the arbitrariness of ZF 
set theory as a universal language for encoding mathematics, 
and opens the door to a more determinate, and therefore more credible, 
form of structural realism in mathematics than is provided on the exclusive and classical foundations of ZF set theory.

Moving beyond ZF style axiomatic set theory provides perspective on various forms of realism which 
seems to be blocked in set theory.
\begin{perspective} Homotopy type theory opens a path towards (univalent, non-pluralistic) 
structural realism for various number sorts, including $\Nat$ and $\Int$.
\end{perspective}

Once structural realism is feasible for numbers the fact that individual numbers can be uniquely determined 
also provides a path towards number realism.
\begin{perspective} Homotopy type theory opens a path towards (univalent, non-pluralistic) 
realism for various number sorts, including $\Nat$ and $\Int$ which will be types (rather than classes or sets).
\end{perspective}
HTT provides a potential path forward  for acquiring an academic CoEA for individuals who are not satisfied with the lack of determinacy of a pluralistic structuralist approach within ZF, who consider meta-class realism unattractive because meta-classes are uncommon entities, and who prefer not to become dependant on relatively unknown 
alternatives for ZF set theory.

%
%
\section{Arithmetical quantities}
Independently of one's view on semantics, one may or may not accept the existence of syntax. In the context of elementary arithmetic, I will identify the existence of syntax with the existence of AQs. I will 
use syntax realism as the label for a position which admits the existence of AQs. 
An attempt to describe in more detail what syntax realism amounts to lead to the following 
Definition, which works conditionally on whether or not semantic realism is adopted.
\begin{definition}
\label{SyntaxRe}
(Syntax realism.) $P$ adopts syntax realism for sort $S$, here chosen from $  \{pnat,nat,int \}$, if the 
following conditions are met: 
\begin{enumerate}
\item $P$ uses a name, say $AQ_S$, for a subtype of AQ.
\item  $P$ assigns to $AQ_S$ as its meaning (extension) a 
collection $\small \mathsf{AQ}_S $ of non-material entities which are referred to 
as AQs (terms, expressions) of (for) sort $S$.
\item If $P$ assumes some form of realism for sort $S$ (by interpreting $S$ as $||S||_P$)  then to each AQ $p$ in  
$\small \mathsf{AQ}_S$ an element $\llbracket p \rrbracket$ of $||S||_P$ is assigned.

\item The elements of $\small \mathsf{AQ}_S$ are referred to as terms for $S$ (alternatively: terms of sort $S$, 
expressions for $S$, or AQs for $S$).
\end{enumerate} 
\end{definition}

If the assignment function $\llbracket - \rrbracket$ is surjective 
$\small \mathsf{AQ}_S$ is considered to be complete 
(from the perspective of $P$) for the description of numbers of sort  $S$.

Adoption of syntax realism for $S$ may but need not imply that $P$ considers $\small \mathsf{AQ}_S$
to be a set on which (mathematical) functions can be defined. Neither is it always the case that an 
equality relation is present for 
$\small \mathsf{AQ}_S$. If present I will assume that this equality relation is denoted with (or can be referred to as)  
$\isaq$, or if confusion with other sorts may lead to confusion as $\isaq^S$. 
The various sorts may be viewed as a type system for AQs which allows polymorphism, that is AQs 
having different types at the same time. Polymorphism is the rule rather than the exception:
\begin{claim} 
For each agent $P$ who adopts syntax realism for each sort $S \in  \{pnat, nat, intt \}$ it is the case that:
 $\small \mathsf{AQ}_{pnat}  \subseteq \small \mathsf{AQ}_{nat}  \subseteq \small \mathsf{AQ}_{int}  $.
\end{claim}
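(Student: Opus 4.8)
The plan is to reduce the syntactic inclusion to the semantic inclusion already recorded for the preferred (decimal) ontology, and then to argue that any agent $P$ who simultaneously adopts syntax realism for all three sorts is committed to a type hierarchy that mirrors it. First I would recall that, under the decimal reading made explicit in Definition~\ref{DecNats} and in the subsequent discussion of decimal integers, one has $N_d^+ \subseteq N_d \subseteq Z_d$ as genuine inclusions, and more generally that every reasonable ontology for the three sorts validates the informal assertions ``every positive natural is a natural'' and ``every natural is an integer''. The point of departure is therefore the observation that the three sorts $pnat$, $nat$, $int$ are not independent: they come ordered by a subtype relation, which I will write $pnat \leq nat \leq int$.

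Next I would spell out what syntax realism for a sort demands in the presence of this subtype order. By clauses~1 and~2 of Definition~\ref{SyntaxRe}, $P$ names subtypes $AQ_{pnat}$, $AQ_{nat}$, $AQ_{int}$ of AQ and interprets them as collections of non-material term-entities $\mathsf{AQ}_{pnat}$, $\mathsf{AQ}_{nat}$, $\mathsf{AQ}_{int}$. The key step is to argue that a term which $P$ accepts as denoting a positive natural must also be accepted as denoting a natural: the very same AQ, read under the weaker sort, still has a well-defined denotation (by clause~3, using the inclusion $||pnat||_P \subseteq ||nat||_P$ that any coherent realism honours), so it qualifies for membership of $\mathsf{AQ}_{nat}$. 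Iterating the argument one step further with $||nat||_P \subseteq ||int||_P$ gives $\mathsf{AQ}_{nat} \subseteq \mathsf{AQ}_{int}$. This is exactly the polymorphism phenomenon advertised just before the claim: an AQ carries all of the types lying above the most specific one it is assigned.

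The main obstacle I expect is conceptual rather than computational: making precise the step ``coherent syntax realism forces $\mathsf{AQ}_{pnat} \subseteq \mathsf{AQ}_{nat}$'' without circularity. Definition~\ref{SyntaxRe} as stated constrains each sort in isolation; nothing in clauses~1--4 literally forbids an agent from choosing, for positive naturals, a notation that is syntactically disjoint from the notation chosen for naturals. So the proof must either (i) take as a standing hypothesis that adopting syntax realism for a family of sorts includes respecting the subtype order inherited from the semantic side — which is the intended reading, since the claim is offered precisely as an articulation of why polymorphism ``is the rule'' — or (ii) weaken the conclusion to ``$P$ may, and on the decimal reading should, take $\mathsf{AQ}_{pnat} \subseteq \mathsf{AQ}_{nat} \subseteq \mathsf{AQ}_{int}$''. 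I would adopt route (i): make explicit that syntax realism for several sorts includes honouring the subtype order among them, after which the inclusions follow immediately by the denotation-preservation argument above. The only remaining bookkeeping is to note that the ``$intt$'' appearing in the statement is the sort $int$, and that $pnat$ is the sort of positive naturals, so that $\mathsf{AQ}_{pnat} \subseteq \mathsf{AQ}_{nat} \subseteq \mathsf{AQ}_{int}$ is the syntactic shadow of $N_d^+ \subseteq N_d \subseteq Z_d$.
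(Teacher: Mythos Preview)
The paper gives no proof of this claim at all; it is stated immediately after the sentence ``Polymorphism is the rule rather than the exception'' and is followed directly by unrelated text. So there is nothing to compare your argument against: the paper treats the claim as a bare assertion, effectively a stipulation about how any reasonable agent $P$ will set up the three collections $\mathsf{AQ}_{pnat}$, $\mathsf{AQ}_{nat}$, $\mathsf{AQ}_{int}$, rather than as a theorem derivable from Definition~\ref{SyntaxRe}.

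Your analysis is in fact sharper than what the paper offers. You correctly observe that clauses~1--4 of Definition~\ref{SyntaxRe} constrain each sort in isolation and do not, by themselves, force the inclusions; an agent could in principle choose syntactically disjoint notations for the three sorts. Your route~(i) --- making explicit that syntax realism adopted \emph{simultaneously} for an ordered family of sorts includes respecting the semantic subtype order --- is exactly the tacit assumption the paper is relying on when it calls this ``the rule rather than the exception''. The appeal to $N_d^+ \subseteq N_d \subseteq Z_d$ as the paradigmatic instance is apt, since that is the only concrete ontology the paper commits to. In short: your proposal supplies a justification where the paper supplies none, and your identification of the hidden coherence assumption is accurate and worth keeping.
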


AQs provide names for numbers, and words may provide names for some AQs, though such relations raise questions too. 
For instance: what is the relation between `zero' and $0$, and between `one' and $1$? 
Are one and $1$ synonyms, i.e. different names for the same number, or is one a name for $1$, 
just as `Einz' is a name for it, though in another language?

\begin{claim}
Assuming number realism for sort $S$ then syntax realism for sort $S$ is pluralistic.
\end{claim}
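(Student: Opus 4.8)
The plan is to observe that number realism for $S$ only fixes the \emph{codomain} of the intended denotation map: it supplies one domain $||S||_P$ (one of $N_{ord}$, $N_{peano}$, $N_d$, a meta-class, or an isomorphism class, depending on which form of realism $P$ holds), but it says nothing about which collection $\mathsf{AQ}_S$ of non-material expressions $P$ picks to name those numbers, nor about which surjection $\llbracket - \rrbracket : \mathsf{AQ}_S \to ||S||_P$ is adopted. So to establish pluralism it suffices to exhibit several genuinely different collections, each satisfying conditions 1--4 of Definition~\ref{SyntaxRe} relative to the \emph{same} $||S||_P$, and then to argue that no criterion available within elementary arithmetic singles one of them out.

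For the exhibition step I would list independent axes along which an admissible $\mathsf{AQ}_S$ may vary, checking in each case that Definition~\ref{SyntaxRe}(1)--(4) is met and, where one wishes to forestall a tie-break by completeness, that $\llbracket - \rrbracket$ can still be made surjective: (i) the signature, e.g. $\mathsf{AQ}_{nat}$ built over $\{0,1,+\}$, over $\{0,1,+,\cdot\}$, over $\{0,S\}$, or over $\{0,S,+,\cdot\}$, each yielding a different term collection with a different $\llbracket - \rrbracket$; (ii) the numeral subsystem supplying the constant AQs, decimal digit strings as in Definition~\ref{DecNats}, binary strings, unary $S(S(\cdots 0))$-stacks in the style of $N_{peano}$, or mixtures; (iii) the notational regime, infix $p+q$ versus prefix $+(p,q)$ or $\mathsf{plus}(p,q)$, and whether poly-infix sumterms $t_1+\cdots+t_n$ are admitted as AQs in their own right (per the poly-infix definitions above) or only as abbreviations of bracketed binary sumterms in the sense of Definition~\ref{SumtermDef}; (iv) the bracketing and identification conventions, i.e. whether $(1)+2$, $((1))+2$, $(1+2)$ are distinct AQs or identified, which amounts to a choice of how much is folded into $\isaq$; (v) whether substitution instances and $\mathsf{let}$-constructs are closed under, since the excerpt already records that these two behave differently with respect to introduced brackets. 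Each choice delivers a bona fide instance of syntax realism for $S$, and distinct choices deliver literally different extensions $\mathsf{AQ}_S$.

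Finally I would argue non-canonicity: since number realism constrains only the target $||S||_P$ and leaves the source free, there is no arithmetical fact forcing the decimal regime over the binary one, or the $\{0,1,+\}$-signature over the $\{0,S\}$-signature; this is the syntactic counterpart of what is already recorded in Claim~\ref{SPCss} and in the polymorphism claim $\mathsf{AQ}_{pnat}\subseteq\mathsf{AQ}_{nat}\subseteq\mathsf{AQ}_{int}$, and it is the Benacerraf-style argument used earlier against a canonical semantics, now transposed to the syntactic side. Hence at least two, indeed many, mutually non-equivalent admissible choices coexist, which is exactly what ``syntax realism for $S$ is pluralistic'' asserts. The main obstacle is not mathematical but definitional: one must pin down what ``pluralistic'' is to mean here (presumably: more than one admissible choice, none distinguished by a principled arithmetical criterion) and then verify that the listed alternatives really are admissible per Definition~\ref{SyntaxRe} rather than mere notational variants already absorbed by $\isaq$; the delicate case is (iv), where one has to be explicit about exactly how much bracketing $\isaq$ is declared to erase, so that the residual freedom is genuine rather than illusory.
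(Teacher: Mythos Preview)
Your proposal is correct and shares the paper's core idea: once number realism fixes a target domain $||S||_P$, the collection $\mathsf{AQ}_S$ of expressions naming its elements is underdetermined, so multiple admissible choices coexist. The paper's own justification is far terser than yours---essentially a single paragraph asserting that ``there will always be different ways to denote a given (semantic) entity'' and ``different ways to denote semantic objects in a simplest manner''---whereas you supply a concrete taxonomy of the degrees of freedom (signature, numeral base, infix/prefix, poly-infix, bracketing, closure under substitution). In that sense you have written out what the paper leaves implicit.

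One point the paper makes that you do not: it explains why the hypothesis of number realism is actually doing work. The paper observes that \emph{without} number realism syntax pluralism can be avoided, because then an identity like $2+2=4$ is merely a derivable string in some fixed calculus rather than an assertion about shared meanings, and one is free to declare a single such calculus canonical. It is precisely the adoption of a separate semantic layer that demotes any particular $\mathsf{AQ}_S$ to ``one option among many for denoting that meaning.'' Your argument treats number realism only as supplying a codomain and never revisits why the claim is stated conditionally; adding a sentence on this would close the gap between your version and the paper's.
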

Indeed I  believe that whoever adopts syntax realism in addition to number realism for sort $S$, 
will also believe that pluralism for syntax cannot be avoided. 
There will always be different ways  to denote a given (semantic) entity, and also different ways 
to denote semantic objects in a simplest manner, given some conventions regarding simplicity of AQs,  given a plausible class of AQs.
Curiously by doing away with number realism syntax pluralism can be avoided at the same time. By making a distinction between AQs and their meaning, specific AQs become mere options for denoting that meaning. In the absence of number realism, however, $2+ 2 = 4$ is merely one of many assertions which is derivable in some calculus and is not an assertion about the respective meanings of $2+2$ and $4$.
\begin{claim}
\label{ClaimSOE}
Syntax realism is open ended.
\end{claim}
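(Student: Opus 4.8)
The plan is to read ``open ended'' as the assertion that Definition~\ref{SyntaxRe} imposes no ceiling: for every agent $P$ adopting syntax realism for a sort $S$ there is an agent $P'$ whose collection $\mathsf{AQ}_S$ strictly extends that of $P$ and who still satisfies all four clauses, and moreover no particular extension is forced. So the goal is not a single self-contained theorem but the exhibition of enlargement mechanisms together with a pluriformity observation showing that the admissible collections form an ascending, never-completed family with no canonical member.

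First I would fix $P$ with its $\mathsf{AQ}_S$ and isolate three independent dimensions along which that collection grows without disturbing Definition~\ref{SyntaxRe}. \emph{Bracketing and layout}: as already observed before the balance principle, whether $(1)+2$, $((1))+2$, $(1+2)$ count as AQs ``need not'' be settled, so a $P$ that has not yet admitted all iterated redundant bracketings may admit them, and the semantic map extends by $\isaq$-invariance. \emph{New operators}: poly-infix sumterms may be introduced one arity at a time, and likewise the abbreviations of the form $t_1 - t_2 - t_3 + t_4$ and the $\mathsf{smnd}_k$ notation, each strictly enlarging the term collection. \emph{Binding constructs}: the $\mathsf{let}$ construct and the substitution-generated terms $[t/x]P[X]$ add AQs not $\isaq$-equal to anything previously present. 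In every case clauses 1, 2 and 4 are preserved verbatim, and clause 3 is preserved because the value of each new AQ is fixed by homomorphy once $\llbracket - \rrbracket$ is known on the old ones; and in the case where $P$ does not adopt number realism clause 3 is vacuous, which only reinforces the conclusion.

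Then I would supply the second half of ``open ended'': not merely that $\mathsf{AQ}_S$ can always be enlarged, but that no enlargement is distinguished. Here the argument leans on earlier material rather than on a construction. By the skewed plurality claim (Claim~\ref{SPCss}) and the claim that syntax realism is pluralistic in the presence of number realism, the choice among the extensions of the three types above is conventional, not factual; hence the family of admissible $\mathsf{AQ}_S$ is directed but has no cofinal canonical element, which is precisely the informal reading of ``the notion of an AQ comes with a syntactic bias'' while ``providing an unambiguous definition seems to be impossible''.

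The main obstacle I anticipate is not technical but definitional: ``open ended'' has no crisp prior meaning in the text, so the real work is to pin down a statement strong enough to be worth asserting yet faithful to the informal intent, and then to verify that each of the three enlargement mechanisms genuinely produces a $P'$ satisfying every clause of Definition~\ref{SyntaxRe} without collapsing any $\isaq$-distinction that $P$ already recognised. A secondary point to treat carefully is uniformity across the sorts $pnat, nat, int$: the enlargements should be chosen so as to respect the polymorphism inclusion $\mathsf{AQ}_{pnat}\subseteq \mathsf{AQ}_{nat}\subseteq \mathsf{AQ}_{int}$, which I would arrange by performing each extension simultaneously at all three sorts.
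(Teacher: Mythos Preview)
Your reading of ``open ended'' as indefinite extensibility---for every $P$ there is a strictly larger $P'$, with no canonical cofinal member---is coherent and defensible from the surrounding text, but it is not the route the paper takes. The paper's explanation immediately following the claim reads ``open ended'' primarily as \emph{vagueness}: syntax realism ``need not come with sharp boundaries on what constitutes an expression,'' and the sentence ``open endedness involves vagueness'' is stated outright. The paper's central mechanism is not any of your three enlargement axes but the contextual introduction of new function symbols: a context may introduce $f\colon nat\to nat$, after which $f(2)+f(3)+1$ may or may not be admitted into $\mathsf{AQ}_{nat}$ depending on how much one insists on knowing about $f$. The paper then sharpens this with a \emph{partial} function (predecessor, taken to be undefined at $0$), where whether $f(2)+1$ is an AQ can turn on whether $f(2)$ exists---so that syntax becomes dependent on semantics. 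That entanglement is the paper's punchline, and your extensibility account does not reach it.

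The two approaches buy different things. Yours yields a clean order-theoretic picture (a directed family of admissible $\mathsf{AQ}_S$ with no top) and connects nicely to Claim~\ref{SPCss} and the earlier pluralism claim. The paper's yields a messier but philosophically sharper observation: even a \emph{single} agent's $\mathsf{AQ}_S$ may fail to be well demarcated, because context-supplied definitions, abbreviations, and partiality conventions intrude. Two smaller cautions on your mechanisms: redundant bracketing does not genuinely enlarge the AQ collection once $\isaq$ is in force (the paper has $0\isaq(0)$), so that axis is weaker than you present it; and the $\mathsf{let}$ and substitution constructs are treated in the paper as meta-level operations on AQs rather than as AQ-formers in their own right, so invoking them as enlargement witnesses would need independent justification.
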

Unlike number realism where the entities serving as numbers are clearly demarcated in each approach, 
syntax realism need not come with sharp boundaries on what constitutes an expression. 
In particular syntax realism may come with a context providing definitions and abbreviations allowing the creation of 
additional expressions. Open endedness involves vagueness. For instance,  a context may introduce 
$f(\_)$ as a function of type $nat \to nat$ and then one may claim that $f(2) + f(3)+1$ is in $\mathsf{AQ}_{nat}$. Some may object that more information on $f$ is needed. If $f(\_)$ is introduced a partial function of type $nat  \to nat$ (for instance the predecessor function $P(\_)$ under the assumption that it is undefined on $0$), is it then valid to claim that $f(2) +1$ is an AQ.
Doing so or not doing so is determined by conventions on how to deal with undefined expressions and such conventions are not unique. One option for dealing with partiality  is to adopt  the convention that whether or not 
$f(2) + 1$ is an AQ  depends on the existence of $f(2)$ which must be settled first. If $f(2)$ does 
not exist $f(2) + 1$ is not an AQ, and if it exists $f(2) +1$ is an AQ. This idea makes syntax dependant on semantics, a significant complication.

\subsection{Sumterm splitting}
Although sumterms do not constitute a set it is intuitively covincing that the components of a sumterm can be 
selected by means of suitable selection operators. The collection of sumterms not being a set, the sumterm splitting operations cannot be considered functions, as the domain and the range of functions are required to be sets, the graph of a function itself being a set.
\begin{definition}
\label{SumtermDOs}
(Sumterm splitting operators.)
The pair $l_s$ and $r_s$ constitutes operations able to decompose a sumterm in its two parts 
so that for all AQs $X$ and $Y$,
for which $X+Y$ is a sumterm: $l_s(X+Y) \isaq X$ and $  r_s(X+Y) \isaq Y$. 
\end{definition}
For instance with $X \isaq 1+2$ and  $Y \isaq 3$ the condition that $X+Y$ is a sumterm fails
 as $1+2 + 3$ is not a sumterm (it is a poly-infix sumterm, however). I assume that if $X$ is not a
sumterm we have $l_s(X) = r_s(X) = 0$. The following implications are valid: $X \isaq Y \implies l_s(X) \isaq l_s(Y)$ and
$X \isaq Y \implies r_s(X) \isaq r_s(Y)$. One may claim that a sumterm has three parts, 
with the addition operator constituting a third part. The later claim, however, conveys no new information for an 
entity which is known to be a sumterm.

The presence of $\isaq$ next to $=$ may be considered overdone and useless, and  the distinction between both equality signs may simply be ignored, a line of thought which is taken in mathematics and education throughout. 
This simplification may be called the ``sums are terms'' paradigm.

\subsection{The sum splitting paradox}
I will now assume that the sums are sumterms paradigm has been adopted and that sum rather than sumterm is used. 
Instead of sumterm splitting operators there are sum splitting operators, though with the same definition.

A naive understanding of sums as (semantic) entities which can be split in parts 
 called summands leads to  a paradox. 
 This paradox will be referred to as the sum splitting paradox, which is an instance of phenomenon (AQ related decomposition paradox) which may 
 arise for other arithmetical operators just as well. 
\begin{proposition} 
\label{SDP}
Without making a distinction between AQs and values, or between $=$ and $\isaq$, the 
very presence of sum splitting operators leads to an inconsistency in arithmetic, 
in particular, it allows to infer $1 = 2$.
\end{proposition}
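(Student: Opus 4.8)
The plan is to exhibit an explicit derivation of $1 = 2$ inside the ``sums are terms'' paradigm, i.e.\ in the theory obtained from elementary arithmetic by (a)~identifying each sumterm with its value (its sum), and (b)~collapsing $\isaq$ into the single equality $=$, while keeping the sum splitting operators $l_s, r_s$ of Definition~\ref{SumtermDOs} together with their defining equations, now read with $=$ in place of $\isaq$. The point to isolate first is that, once $\isaq$ is abolished, the only remaining equality $=$ is governed by the usual logical laws of reflexivity and substitutivity (Leibniz's law): from $u = v$ one infers $g(u) = g(v)$ for every operation $g$ in use. In particular $l_s$ is forced to respect $=$: if $u = v$ then $l_s(u) = l_s(v)$. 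This is precisely the congruence property that $l_s$ was \emph{not} required to satisfy for the finer relation $\isaq$ — for $\isaq$ it holds trivially, since $\isaq$ merely erases bracketing and spacing — and it is the hinge of the whole collapse.

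Granting that observation, the contradiction is a two-line calculation. Commutativity of addition supplies the valid arithmetical identity $1 + 2 = 2 + 1$ (both sides having value $3$). Now $1+2$ and $2+1$ are both sumterms, the first with left summand $1$ and the second with left summand $2$, so the defining equations of $l_s$ give $l_s(1+2) = 1$ and $l_s(2+1) = 2$. Applying substitutivity of $=$ to $1 + 2 = 2 + 1$ yields $l_s(1+2) = l_s(2+1)$, and chaining the three equalities,
\[
1 \;=\; l_s(1+2) \;=\; l_s(2+1) \;=\; 2 ,
\]
so $1 = 2$, contradicting $1 \neq 2$ in arithmetic of characteristic zero; the theory is therefore inconsistent. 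Equivalently, and perhaps more in the spirit of the paradigm: under (a) and (b) the expressions $1+2$ and $2+1$ \emph{are} one and the same entity, namely $3$, so the single value $l_s(3)$ must simultaneously equal $1$ and $2$. The same derivation runs verbatim with $r_s$ in place of $l_s$; and the appeal to commutativity can be replaced by $0 + 1 = 1 = 1 + 0$ at the cost of one extra step (from $0 = 1$ add $1$ to both sides to get $1 = 2$).

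The only delicate point — the one the write-up should dwell on — is the justification behind that first observation: why conflating $\isaq$ with $=$ compels the syntactic left-summand selector $l_s$ to become a congruence for value-equality. The answer is that an equality symbol in any standard logical treatment carries reflexivity and substitutivity, so once $l_s$ is admitted as an operation and $=$ is taken to be \emph{the} equality, congruence of $l_s$ with respect to $=$ is not negotiable; the decomposition datum ``$l_s(X+Y) = X$'' then transports the many distinct ways of writing $3$ as a sum straight onto the corresponding left summands. Beyond this, it is worth noting — to sharpen the diagnosis — that the argument uses nothing about $l_s$ except its two defining equations on genuine binary sumterms: no convention for $l_s$ on non-sumterms and nothing about poly-infix sumterms enters. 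The paradox is thus intrinsic to possessing any value-respecting summand selector at all, which is exactly why a principled resolution must retain some distinction between AQs and their values, or between $\isaq$ and $=$.
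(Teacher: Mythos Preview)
Your proof is correct and follows essentially the same route as the paper: both derive $1 = l_s(1+2) = l_s(2+1) = 2$ from the defining equations of $l_s$ together with $1+2 = 2+1$ and substitutivity of $=$. Your write-up is more explicit about why substitutivity is forced once $\isaq$ collapses into $=$, and your remarks on alternative derivations and on what is actually used are a welcome addition, but the core argument is identical.
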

\begin{proof}
Assume the presence of $l_s$ and $l_r$ as in Definition~\ref{SumtermDOs}. Not distinguishing AQs and values
one may refer to these operations as sum splitting operations, and assume that the following implications are valid: 
$l_s(X+Y) = X$ and $  r_s(X+Y) = Y$. Now taking $X =1$ and $Y =1$ yields $1 = l_s(1+2) = l_s(2+1) = 2$. 
\end{proof}
\begin{claim}
As $1=2$ is an unacceptable conclusion, either some of the (implicit or explicit) assumptions for Proposition~\ref{SDP} or 
 some part of the argument in the proof of Proposition~\ref{SDP} must be dismantled in any sound approach to arithmetic.
\end{claim}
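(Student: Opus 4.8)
The plan is to read the claim as a straightforward modus tollens built on Proposition~\ref{SDP}, so that the ``proof'' is really an argument about the logical shape of that proposition rather than a fresh piece of arithmetic. First I would fix a minimal working notion of soundness: call an approach to arithmetic \emph{sound} if every closed equation it licenses holds in the intended datatype(s) for $\Nat$ and $\Int$; in particular a sound approach never licenses the closed equation $1 = 2$, since $1 \ne 2$ holds there. This is the only property of soundness the argument needs, and it is exactly what the hypothesis of the claim (``$1 = 2$ is an unacceptable conclusion'') already concedes.

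Next I would isolate, from the proof of Proposition~\ref{SDP}, the finite list of ingredients that were actually used: (a) the availability of the sum splitting operators $l_s, r_s$ of Definition~\ref{SumtermDOs}; (b) the ``sums are terms'' simplification, i.e.\ the identification of AQs with their values and hence of $\isaq$ with $=$, which upgrades the purely syntactic identities $l_s(X+Y) \isaq X$ and $r_s(X+Y) \isaq Y$ to the semantic equations $l_s(X+Y) = X$ and $r_s(X+Y) = Y$; and (c) the substitution step instantiating these at the AQs $1+2$ and $2+1$ together with the arithmetical fact $1+2 = 2+1$. I would remark that each of (a)--(c) is an \emph{explicit} step of that proof, so no deeply hidden premise needs to be unearthed, while noting in passing the one background assumption worth flagging: that the ambient equational logic permits substitution of equals for equals, which is itself one of the ``parts of the argument'' the claim allows to be dismantled.

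Then the argument proper is a single line. Suppose some approach to arithmetic retained all of (a), (b), (c) and the equational chaining used to combine them. Running the proof of Proposition~\ref{SDP} verbatim inside that approach licenses $1 = 2$, so by the soundness criterion the approach is not sound. Contrapositively, any sound approach must reject at least one of (a), (b), (c) or the chaining logic, which is precisely the statement of the claim and sets up the survey of repair strategies announced in the abstract.

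The main obstacle I expect is not technical but definitional: the claim quantifies over ``any sound approach to arithmetic'', so the entire force of the proof rests on the minimal soundness criterion adopted, and the write-up must make clear that this criterion is uncontroversial rather than question-begging. A secondary subtlety is to keep the disjunction honest: the claim must not be overread as singling out sum splitting as the culprit, since the proof of Proposition~\ref{SDP} equally implicates the $=/\isaq$ collapse and the use of unrestricted substitution; a clean statement of the conclusion should therefore leave open \emph{which} ingredient is to be abandoned, this being exactly the subject of the subsequent survey.
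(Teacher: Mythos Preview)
Your proposal is correct, but note that the paper does not supply a proof for this claim at all: it is stated immediately after Proposition~\ref{SDP} and its proof and treated as a self-evident modus tollens, with the following sentence simply announcing that ``ways of dismantling the argument for Proposition~\ref{SDP}'' will be called solutions of the sum splitting paradox. What you have written is a careful unpacking of that implicit contraposition---fixing a minimal soundness criterion, enumerating the ingredients (a)--(c) actually used in the proof of Proposition~\ref{SDP}, and observing that retaining them all reproduces $1=2$---which is more than the paper itself provides and is entirely in line with how the claim is meant to be read.
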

Ways of dismantling the argument for Proposition~\ref{SDP} will be referred to as 
solutions of the sum splitting paradox.

\section{Five solutions of the sum splitting paradox}
\label{FiveSol}
I will distinguish five solutions of the sum splitting paradox each of which remove it as an 
obstacle for the credibility of an account of elementary arithmetic of naturals and integers.

Each of these views can be combined with the idea that sum is a role for a value (see Paragraph~\ref{Roles} above),
rather than a particular kind of values or other entities. It is only in the matter of explaining the notion of a summand that
the viewpoints below differ.

\subsection{Sumterms: taking more detail into account}
The sumterm solution (for the sum splitting paradox) amounts to making a clear distinction between syntax and semantics for arithmetic.  Using sumterms the issues involved can be discussed in more detail
 with the effect that what seems to be a paradox at first sight is in fact a mere a misunderstanding, coming about from a mere confusion of different notions of equality.

Sumterms are among the arithmetical quantities which are not arithmetical values (decimal numbers) at the same time.
Now it is claimed that sumterms instead of sums which admit decomposition by way of $l_s$ and $r_s$. 
The splitting operations  $l_s$ and $r_s$ 
are viewed as operations on AQs. It follows that substitution of equals by 
equals may fail, in the above example  $l_s(1+2) = l_s(2+1)$ cannot be inferred from $1+2 = 2 + 1$. 
The corresponding  implication which can be maintained instead is $1+2 \isaq 2 + 1 \implies l_s(1+2) \isaq l_s(2+1)$.
But $1+2 \isaq 2 + 1$  is not valid, and in particular it  does not follow from $1+2 = 2 + 1$.
 The derivation of $l_s(1+2) \isaq l_s(2+1)$ which occurs in the proof of the sum splitting paradox fails and as a consequence the paradox disappears.

 In the framework of the sumterm solution, the introduction of the functions $l_s$ and $l_r$ is considered unproblematic 
 on the following grounds: (i)  sumterms are pairs, (ii) the components of a pair can be arbitrary 
 entities (including AQs, even  in the absence of a rigorous definition of AQs, which may 
 conceivably found or constructed outside conventional set theory, and (iii) that pairs can always be decomposed, 
 a principle which is prior to ZF set theory. 
 
 The sumterm solution comes with the necessity to distinguish between $=$ and $\isaq$, 
 though of course not with the commitment to the use of the particular notation 
 chosen for $\isaq$ in this paper.

\paragraph{Assessment.} 
The sumterm solution constitutes a plausible option for doing away with the sum splitting paradox. 
The sumterm solution comes with a high price, however, and that is to take the distinction between syntax and semantics seriously and to allow operations on expressions (AQs) which have no counterparts in the world of values (numbers).

\subsection{Contradiction tolerance based on a  foundational specification}
Contradiction tolerant solutions for the sum splitting paradox accept the presence of the argument for a contradiction,
while making sure that no extensive harm is done to the reliability of arithmetic. Paraconsistent logic may be adopted to gain condition tolerance in a very general style. However, I have been unable to find a convincing paraconsistent logic for dealing with the sum splitting paradox. Instead ad hoc strategies can be adopted which take the subject matter of elementary arithmetic into account. 
I will distinguish two different options for condition tolerant solutions of the sum splitting paradox. 

First it is assumed that arithmetic is cast in terms of the equational theory of an abstract arithmetical datatype, 
which is specified by means of an algebraic specification. This specification is given a
 foundational status, which means that its consequences, including negations of non-derivable closed identities, 
  may overrule 
any conclusion in the form of an equation (such as $1=2$) which has been derived by other means. If a proof is at odds with the implications of said foundational specification of the arithmetical datatype at hand,
 the proof is rejected and its conclusion is not adopted. 
A foundational specification which can be used in the case of $int$ is given in Paragraph~\ref{Fspec} below.

In order to avoid mistakes and as a method to prevent costly checks of results of proofs 
against the given foundational specification it is advised 
not to apply substitution of equals for equals at top level to arguments of $l_s$ and $r_s$. Meta-theory  
concerning the observation that this rule of thumb suffices to avoid the derivation of wrong conclusions is considered 
inessential and need  not be provided.

The foundational specification solution has important predecessors, for instance ZF set theory may be considered a foundational specification against the background of which naive set theory may be used in daily mathematical practice in such a manner that the risk of running into a form of the Russel paradox is not completely absent. 
A similar idea is put forward in Bergstra~\cite{Bergstra2020} in the case of fractions instead of sims. 
The foundational specification (solution) approach rejects 
the distinction of $=$ and $\isaq$. This approach is both practical and theoretically sound and does 
not involve any sophisticated consideration of syntax. 
It might well serve as the basis for teaching elementary arithmetic.

\paragraph{Assessment.} 
Contradiction tolerance seems to be an attractive feature of reasoning systems, whereas reasoning systems that 
have been designed in such a manner as to minimise the probability of deriving an invalid conclusion come with 
seemingly artificial restrictions which give rise to an unnatural look and feel.

The sumterm solution may be adopted as an informal device which helps to work in such a manner that conclusions drawn
in a contradiction tolerant framework based on a foundational specification will not be in contradiction with that specification.
This means that reasoning (in the condition tolerant setting)  must preferably 
be done in such a manner that it can be provided with additional detail 
(such as type distinctions between value and AQ) so as to lead to reasoning patterns that are sound for the sumterm solution.

\subsection{Contradiction tolerance with pragmatic justification}
Contradiction tolerance can be obtained without adopting a foundational specification by means of reliance on a body of practical experience. Then arithmetic, like any other area of human endeavour is viewed 
as being informal and fault prone, and only to be trusted to knowledgeable persons, able to avoid a range of 
well-known as well as lesser known mistakes.

The introduction of $l_s$ and $r_s$ may simply be rejected because apparently it leads to problems. 
But it may be accepted by those who know how to avoid such problems. There is nothing special about these matters which
suggests that avoiding mistakes requires anything else in excess of experience with the subject at hand.

   Other wrong inferences may include instances derived from making any of the following assumptions: 
  (i) $x \cdot x$ is always even, (ii) $x \cdot (-x) = 0$ (both cases 
  confusion addition and multiplication), (iii) $x \cdot (y + z) = (x \cdot y) + z$
  (misunderstanding of associativity),  
  and  (iv) $x+y$ is always larger than $x$ (failing to take negative values into account). 
  
Arithmetic may be considered as just any practical activity: avoiding pitfalls and making sound steps go hand in hand. 
For a car driver there are many mistakes which are better avoided, 
and thinking in terms of such mistakes as well as in terms of avoidance of these and similar mistakes is helpful. 
Arithmetic thus conceived consists of many rules of thumb on how to solve certain ``problems'' 
and of many guidelines on how to avoid mistakes of various kinds. The idea that following a logical path of reasoning 
is sure to lead to valid conclusion and that therefore purely logical reasoning can (and should) be applied is 
as distant as it is in playing chess. The idea that arithmetic is a manifestation  
of a fully reliable logic applied logic is rejected as lacking both contemporary evidence and historical justification.
 
\paragraph{Assessment} The idea that arithmetic is just like most areas of human competence, a matter of experience and 
training, and a matter of adopting successful patterns of behaviour while avoiding patterns likely to be less successful, is
attractive and cannot be rejected in a principled manner. 

The situation may be comparable with chess: one may teach a chess student a
classical course on chess with openings, 
tactics, and strategy, and with a sample of end games. The student ends up with a lot of rules of thumb in their 
mind, and some ability to apply these guidelines when playing a game. 
 At the same time a student may be explained that
 a suitably programmed computer may somehow quickly find out what is the best move while the student
 has no clue regarding how the algorithm, which is implemented in the computer at hand, works. 
 For the student playing chess is a matter of following 
 guidelines and  avoiding mistakes, even if it is known that a more principled approach exists, i.e. the one followed by an 
 advanced chess playing computer.
 
 In the case of arithmetic a successful fast track introductory 
 course may not even touch the observation that arithmetic can be done in a 
 waterproof manner based on rock solid logic without any notion of mistake and avoidance of mistakes.

A disadvantage of perceiving arithmetic, as a topic being taught, as a mere pragmatic collection of do's and don'ts is that it
may be unhelpful for the advancement of logical reasoning competence, in that respect 
a missed opportunity so to say.
\subsection{Conventionalism/traditionalism on function definitions}
One may hold that functions must exist in set theory and that for that reason any function must
 have a set of mathematical entities as its domain. 
The sums on which $l_s$ and $r_s$ are supposed to work do not 
constitute a plausible collection of mathematical entities in ZF set theory and
for that reason it is invalid to introduce these functions in the first place. Introduction of sum 
splitting operations is at odds with standard conventions and traditions 
and can be refuted for that reason.
As a consequence Proposition~\ref{SDP} disappears.

Conventionalism on function definitions refutes the very plausibility of the introduction 
of functions $l_s$ and $r_s$. Conventionalism on function definitions comes with the suspicion that sumterms do not
constitute a set which can be used in a mathematical argument. 

\paragraph{Assessment.} Conventionalism on function definitions may 
be adopted by anyone who considers the incorporation of the syntax of expressions in mathematics to be
a bridge too far, let alone the incorporation of logic, and to bring about 
an extension of elementary arithmetic beyond its natural limits. Conventionalism on function definitions leaves the 
common terminology of summands unexplained, and will argue that there is always a transition from the formalised mathematics to informal language the elements of which lack formalisation and that summand is one of the notions which
play an intermediate role and are not considered to be in need of mathematical explanation or definition.

\subsection{Conventionalism/traditionalism on arithmetic signatures}
Conventionalism on arithmetic signatures denies a participant of school arithmetic the right
 to introduce functions and in particular objects to the introduction of both sumterm splitting operators, 
 because of the absence of any convention or tradition supporting such introductions. The architecture (that is signature) of the language of elementary arithmetic is supposed to be determined in advance. The idea that elementary arithmetic inherits from mathematics or 
logic a systematic methodology for redesigning (or extending) itself is rejected.

Conventionalism on arithmetic signatures is less restrictive than conventionalism on function definitions as outlined above,
and instead of objecting to the introduction of sum splitting operations on methodological grounds this 
form of conventionalism  objects to the the introduction of sum splitting operators because these change the subject 
(architecture) of elementary arithmetic in a problematic manner.

The introduction of sum splitting operations $l_s$ and $r_s$ is considered foreign to the tradition in and is rejected
for that reason. The proper choice of operators is considered part of arithmetic, just as with any natural language, and no rules of engagement for the introduction of new functions are made explicit. 
In particular as (elementary) arithmetic is considered to be prior to mathematics as well as being prior to logic, no rules or conventions for the introduction of new functions, beyond the use of abbreviations for explicit definitions, are taken on board, and definitely not if these rule only appear in subsequent stages of the development of mathematics and logic. 

\paragraph{Assessment.} Conventionalism on arithmetic signatures is a perfectly valid way to deal with the sum splitting paradox. This solution has these disadvantages: (i)  it leaves the notion of a summand unexplained,
(ii) it provides no definition of the notion of a sum given the fact that all decimal numbers are sums.

\subsection{Foundational specifications of $\Nat$ and of $\Int$}
\label{Fspec}
The datatype DGS (digits) introduces a meta function for successor: with $d$ ranging over the constants 
$0, \ldots, 8$, $d^\prime$ stands for $1,\ldots,9$ respectively. the abstract datatype of DGS is the initial algebra of the 
specification in Table~\ref{DgS}. 

Table~\ref{Int} provides an initial algebra specification of the abstract arithmetical datatype $\Int(dec, +,-)$. All elements of $N_d$ serve as constants in this specification, which is indicated by writing $dec$ for these
constants. The specification contains infinitely many equations because $\sigma$ ranges over all of 
$N_d$. When omitting the opposite operator $-$, and both equations for it,  
an initial algebra specification of $\Nat(dec,+)$ results. The specification of Table~\ref{Int} is a simplified version of a specification in~\cite{Bergstra2020b}.

With some work the equations can be given an alternative orientation so that precisely the elements of $Z_d$ will appear as normal forms and a definition of the arithmetical datatype  $\Int(dec, +,-)$ is obtained. A foundational specification, however, may just as well be an abstract datatype specification.
 \begin{table}
\centering
\hrule
\begin{align*}
0^{\prime} &\equiv 1&3^{\prime} &\equiv 4&6^{\prime} &\equiv 7\\
1^{\prime} &\equiv 2&4^{\prime} &\equiv 5&7^{\prime} &\equiv 8\\
2^{\prime} &\equiv 3&5^{\prime} &\equiv 6&8^{\prime} &\equiv 9
\end{align*}
\hrule
\caption{$\mathsf{DGS}$: enumeration and successor notation of decimal digits}
\label{DgS}
\end{table}

\begin{table}
\centering
\hrule
\begin{align}
 \texttt{include}&\texttt{: Table DGS} \nonumber \\
	(x+y)+z 			&= x + (y + z)\\
	x+y     			&= y+x\\
	x+0     			&= x\\
	x + (-x) 			&= 0  \\
	-(-x) 				&= x\\
	d^\prime 			&= d+1 ~(\mathsf{for}~d \in \{1,2,3,4,5,6,7,8\})\\
	9	+1 			&= 10\\
	\sigma d + 1		&= \sigma d^\prime  ~(\mathsf{for}~d \in \{0,1,2,3,4,5,6,7,8\})\\
	\sigma + 1 = \tau \to \sigma 9 + 1	&=\tau 0
\end{align}
\hrule
\caption{Specification of $\Int(dec,+,-)$; $\sigma$ ranges over nonempty digit sequences (not starting with 0)} 
\label{Int}
\end{table}

\section{More solutions of the sum splitting paradox}
There is no way to force anyone to choose between the five solutions for the sum splitting paradox, or to design any other solution for a problem one may not recognise. Under the assumption that the sum splitting paradox requires a solution,
 it is easy to find alternatives and modifications of these five solutions to it, and combinations of these views are options as well. I will understand the five solutions as building blocks for the design of solutions, the virtue of which may be 
 a better proximity to certain intuitions.
 
 I wil first discuss a limitation on the sumterm solution. 
Although adopting the distinction between AQs and values with sumterms as a special case of 
AQs helps to avoid the problematic fact in Proposition~\ref{SDP}, it will not prevent that upon the introduction of a more detailed view of AQs similar issues arise. 
Repeating the same strategy will provoke an infinite regress rather than solve a problem. 
This observation is detailed in the following Paragraph.

\subsection{Stopping an infinite regress}
\label{StopInfReg}
Stopping an infinite regress may constitute an argument against adopting the sumterm solution. 
The argument runs thus: (i) suppose sumterms and the sumterm solution of the sum splitting 
paradox have been adopted. Now given an AQ, say $X$, let $\#_{bp}(X)$ be the number of bracket pairs in $X$. For instance $\#_{bp}(((1+2)+(2+0))+0 )= 3$.

As abstract entities, the expressions $0 $ and $ (0)$ are the same, so $0 \isaq (0)$, from which it follows that 
$\#_{bp}(0) \isaq \#_{bp}((0))$ which implies $1 = \#_{bp}((0)) = \#_{bp}(0)=0$. 
Apparently a paradox similar to the sum splitting paradox has arisen. I will refer to such paradoxes as AQ related paradoxes. 
This one may be called the bracket pair counting paradox.

Apparently adopting the sumterm solution and the introduction of AQs does eliminate the phenomenon of AQ related paradoxes, while these steps only help to avoid the sum splitting paradox. A plurality of solutions for 
the bracket pair counting paradox may be designed just as for the the sum splitting paradox. Assuming that a solution is chosen by introducing a more detailed view, then a further ramification of AQs may be introduced with an equality $\isaq^{bp}$ for bracket pair aware AQ equality so that (i) $0 \not \isaq^{bp} (0)$, and (ii) 
$X \isaq^{bp} Y \implies  \#_{bp}(X)  \isaq^{bp}  \#_{bp}(Y)$
instead of $X \isaq Y \implies  \#_{bp}(X)  \isaq  \#_{bp}(Y)$. With these preacautions the bracket pair paradox disappears.

At this stage, however, there is further room for AQ related paradoxes. For instance  counting the number of spaces in an expression with a new operator, say $\#_{sp}(X)$ will produce similar complications. 
For instance $\#_{sp}(1~+~2)= 2$ and $\#_{sp}(1\!+\!2)= 0$, while $1~+~2  \isaq^{bp}1\!+\!2$. It turns out that AQ related paradoxes will appear whenever a notation for AQs is used which is less abstract (i.e. provides more detail) 
than the AQs one has in mind. Continuing along the lines of ever more detailed 
views of AQs ultimately leads to an infinite 
regress, which is both unattractive and unconvincing.

Both contradiction tolerance (with or without foundational specification) and conventionalism (either for function definitions or for arithmetical signatures) provide a workable solutions for the bracket pair counting paradox.
My preference is to adopt sumterms and in addition to adopt conventionalism/traditionalism on arithmetic 
signatures as a justification for rejecting the introduction of a bracket pair counting operation.

\paragraph{Assessment.} Avoiding an infinite regress is a justified objective. It follows from this idea that the sumterm 
solution to the sum splitting paradox cannot be justified as a means to do away with AQ related paradoxes
in any general manner. The sumterm solution can be motivated only by its use to explain in a systematic manner 
a notion of a summand in relation to a notion of sum, while the sumterm solution can not be motivated
by the objective to get AQ related paradoxes out of the way for once and for all.

I hold that commitment to the virtue of avoiding an infinite regress plausibly comes with an acceptance of either contradiction tolerance or some form of conventionalism/traditionalism.

Avoiding an infinite regress can be combined with the adoption of sum splitting operations and adopting 
contradiction tolerance on the basis of a foundational specification at the abstraction level of $\isaq$. The details of such an approach are non-trivial because substitution has to be defined in a bracket pair counting compliant manner, which necessitates a redevelopment of equational logic from first principles. I will not develop such details below. Instead I will 
adopt conventionalism/traditionalism on arithmetic signatures as a justification for dismissing the introduction of 
the bracket pair counting operator $\#_{bp}(-)$.

Conventionalism on function definitions and/or on arithmetic signatures will be increasingly convincing 
when AQ related paradoxes are met at lower levels of abstraction. 
Functions which are introduced for the sole reason of exposing a potential contradiction may 
be rejected as foreign to arithmetic. Indeed, the fact that summands are somehow a well-known notion in arithmetic 
essentially contributes to the justification of the introduction of  sumterm splitting operators.

\subsection{Combinations and modifications of the five solutions}
\label{Combi}
Some combinations and minor adaptations of the above views are attractive. I will survey some options for such combinations.
\subsubsection{Delayed contradiction tolerance}
The sumterm solution may be adopted merely as a guideline on how to work on the basis of the
pragmatic contradiction tolerant solution. Using the sumterm solution in this manner amounts to requiring that 
each proof which is written in the context of a condition tolerant 
approach allows being refined by replacing zero or more  equality signs (=) by  in to $\isaq$ signs 
with the effect that a valid proof in the context of the sumterm solution is found. Adopting sumterms,
sum splitting operators, and AQ equality, as a means to avoid invalid reasoning patterns in a contradiction tolerant setting, can be considered a heuristic use of the sumterm solution. Alternatively this approach may be labeled delayed contradiction tolerance.
Adopting the sumterm solution as a heuristic device provides significant
guarantees that no equations or negated equations are derived which either contradict the foundational 
specification or which are in contradiction with outcomes obtained in standard practice.

\subsubsection{Delayed conventionalism/traditionalism}
Who opposes contradiction tolerance as a method for providing foundations of elementary arithmetic may prefer to combine the sumterm solution with either one of the forms of conventionalism, thereby accepting 
both sumterm splitting operators as being well-defined while rejecting further function introductions such as the 
counting of bracket pairs which was mentioned above. I will refer to this idea as delayed conventionalism/traditionalism.

\subsubsection{Viewing the foundational specification as potentially faulty}
One may portray the use of a foundational specification  as being 
potentially fault prone because axioms may be wrong and so may be proofs based on the axioms. 
This viewpoint rejects the distinction between both forms of contradiction tolerance and incorporates the first 
form (based on a foundational specification) into the second form (doing without a foundational specification), by viewing the foundational specification as merely one of many ways in which professional experience with arithmetic can be documented.

\subsubsection{The sumterm solution as a device in the background} 
One may reject the introduction of sum splitting operations 
(perhaps motivated as an instance of either or both forms of conventionalism) 
 and at the same time admit that sum splitting operations may be dealt with in the background, that is invisible for students, and then claim that a choice for a solution of the resulting problem (i.e. choosing one of the five options or suggesting yet another option)  is made in the background  as well, in such a manner that premature 
 disclosure to students can be avoided and that it does not matter much which choices have been made.
 
 \subsection{Time dependent choices and combinations of views}
 The views outlined above share the property that the same closed identities in arithmetic can be derived and therefore
 it may be considered unproblematic if one's views on the matter change in time. There is no need for anyone to choose between the options listed in Paragraph~\ref{Combi}. An option is to have different views, chosen from this listing, at different times, or to claim that each of these is acceptable and that one leaves the choice to the agents with whom one is communicating about arithmetic.

\section{Concluding remarks}
Sumterms as well as sumterm splitting operators have been introduced and it is shown that a naive approach to
these matters gives rise to paradoxical results which are referred to as the sum splitting paradox. 
The sum splitting paradox is considered an instance of AQ (arithmetical quantity, i.e. expression) 
 related paradoxes which arise in cases where (i) semantics is more abstract than syntax
in a context where (ii) certain syntactic differences are brought to the surface, in spite of being 
abstracted from in the preferred semantic view.

A survey is given of policies for avoiding the sum splitting paradox from occurring or for making sure that
the consequences of its occurrence are acceptable.

This work may be considered as belonging to the foundations of school arithmetic, where these foundations are 
primarily derived from, or biased towards, the  theories of datatypes and abstract datatypes in theoretical computer science.
Naturals and integers constitute instances of abstract arithmetical datatypes. 

Several conceptual difficulties arise when working out the details of the perspective just outlined. 
First of all syntax cannot be simply defined in advance, and we adopt 
the idea that AQs (arithmetical quantities) are a vague (fuzzy?) initial approximation of syntax, the elements of which 
come about most clearly on the basis of a given signature. In other words a distinction between syntax and semantics is assumed without either of these being initially defined in any detail. Assuming that AQs for natural numbers are given at some stage (such as $(91+0) +(12+3)$) one may ask for the meaning of these AQs. 
Remarkably mathematics, philosophy, and logic have neither 
produced a stable answer to that question nor shown that no such answer exists. 
The philosophical ontology of numbers seems to be an unfinished subject.

Apparently the educational literature on school mathematics contains very little explicit information 
about the nature (meaning, ontology) of natural numbers. In~\cite{Vinner1975} the naive Platonic approach (n.P.a.)  is advocated which assumes various types as given without making any attempt to giving definitions. 
In this work no distinction is made between defining a system of natural numbers up to isomorphism (i.e. an abstract arithmetical datatype for naturals), and defining
an arithmetical datatype for naturals.  In~\cite{Vinner1991} the same author provides a theory of definitions in a technical context, claiming that arithmetic and mathematics constitute a technical context. 
This suggestion is then made that definitions (of technical notions, say in arithmetic) are useful for the stronger (more interested) students but should not be forced upon students. In the case of natural numbers this idea suggests that the n.P.a. may give way for the so-called definitional approach for students who can appreciate such explanations.

After a survey of various options I propose 
(a certain choice of) projection semantics for AQs for naturals and integers, for instance arriving at the digit sequence $106$ as the semantics of the AQ $99+7$ and also as the semantics of itself. $106$ is called a decimal natural number in order to emphasise the 
fact that the details of decimals have not been abstracted away. In projection semantics for naturals,
decimal natural numbers are not decimal representations of natural numbers, but on the contrary no commitment is made to the existence of entities that qualify as natural numbers, while the existence of decimal natural numbers is assumed.

Secondly the notion of a sum is chosen to be that of a role name.
Summand, however, is not a role name, and neither is sumterm.
Remarkably roles play nearly no role in datatype theory, which seems to be a missed opportunity.

\subsection{Options for further work}
Many topics for further work can be formulated. In particular  the following questions which merit further efforts:

(i) To develop a theory of signs for naturals (natsigns) and integers (intsigns). Signs are not abstract entities and the philosophy of objects,
including mereology, will enter the discussion. Recent progress in mereology which may prove indispensable for 
the mereology of natsigns and intsigns are stage theory and worm theory,  see \cite{Sider1996}.

These approaches introduce aspects which seem to be entirely foreign to theoretical computer science 
because computer science has less focus on computers as material entities than on the abstractions which come with analysis and engineering.
(ii) To develop a theory of proterms (product terms) where factors are components of proterms that can be 
selected by means of suitable splitting operations. The situation is similar to the development of sumterms but there is an important difference: product need not be understood as a role name in the arithmetic of naturals and of integers as the products constitute the complement of the primes and because viewing prime as role name is not plausible, so is viewing non-prime as a role name.

(iii) To develop an informal logic of elementary arithmetic which allows incorporation of  
AQ related paradoxes and solutions of these.

(iv) To analyse the notion of a square. For integers squares are a credible class of numbers,
so that there is little incentive to view ``square'' as role, instead of as a name for a class of entities. 
Considering the real numbers, however,
 the situation changes: being a square is the same as being positive, and the term square more 
plausibly serves a role name. 

(v) In~\cite{BergstraP2020} so-called true fractions are investigated, which are certain elements of arithmetical datatypes with unconventionally few identifications between different fracterms. 
It remains to be seen whether or not a corresponding notion of a true sum can be defined.

%

\subsection{Connections with computer science and other work}
In computer science, in particular in connection with programming and software engineering the presence of syntax is so
prominent that it is hard to imagine that in mathematical syntax is much less important. 
In practical computer science it is well-known that adding mathematics semantics to programming is 
difficult because doing so hardly corresponds to the intuitions of the average programmer. 
In elementary arithmetic the situation is the other way around: adding an explicit coverage 
of syntax to semantics oriented thinking goes against the intuitions of the average professional, 
and is quite difficult, if only for that reason. 
In this paper I have tried to find some middle ground where both syntactic and semantic considerations play a role.

The terminology of datatypes and abstract datatypes fits well with my objectives. 
For instance the decimal natural numbers may be understood as a datatype 
which implements the abstract datatype of natural numbers. 
This terminology accommodates the observations that: 
(i) decimal natural numbers can be understood as well-defined abstract entities in a naive set theory, 
(ii) abstract natural numbers do not exist otherwise than as mappings from the datatypes in the 
abstract datatype to elements thereof, which is the mechanism of pointed structure classes of Paragraph~\ref{PSC}.
For (abstract) datatypes and algebraic specifications thereof I 
refer to~\cite{BergstraT1995,Goguen1989,EhrichWL1997}.

Having thus established that the notions of an arithmetical datatype and an abstract arithmetical datatype are of 
relevance for elementary arithmetic,  it is fitting to notice that conceptualising 
arithmetic in terms of datatypes and abstract datatypes  also leads to novel number systems: common meadows~\cite{BergstraP2014a}, 
transrationals as in~\cite{AndersonVA2007} wheels 
in~\cite{Setzer1997} and~\cite{Carlstroem2004}, and further options surveyed in~\cite{Bergstra2019b}.

Contradiction tolerance may be obtained via the use of paraconsistent logics. In~\cite{BergstraB2015} 
an application of paraconsistency in the setting of arithmetical datatypes is developed. 
An application of condition tolerance via paraconsistency is developed in detail in~\cite{BergstraM2017}.
For a survey on paraconsistent logics I mention~\cite{Middelburg2011} and ~\cite{Middelburg2020}. 
Nevertheless, in spite of the presence of a plurality of approaches to 
paraconsistent reasoning  I have not been able to develop a convincing solution of the sum splitting paradox 
along the lines of condition-tolerance as provided by paraconsistency.

Viewing AQs as names one may take names for a core notion in the development of elementary arithmetic. 
That path is taken in~\cite{Rollnik2009}. As it turns out, importing a theory of names into 
elementary arithmetic is a non-trivial overhead. Distinguishing syntax and semantics for arithmetic, may be done and undone repeatedly by the same person. Such steps have been investigated in~\cite{NicaudBG2001} where these
steps  are termed disassociation and association respectively.

In~\cite{Bergstra2020} fracterms are introduced as a syntactic counterpart of fractions. 
The work on sumterms has in part been done in the same style as the work on fracterms, though there are significant differences: in the current paper there is a fairly clear description of what (a) sum is while 
for fractions no corresponding view is developed in~\cite{Bergstra2020}. Unlike for fractions, about which a formidable literature exists, there seems to be no work in the educational literature where a definition of a sum of integers is provided in advance of the development of educational methods about it. Literature on sums of fractions abounds, e.g.~\cite{Howard1991}.
In the context of integer arithmetic, the focus in the literature is always on how to compute a sum, and not on what a sum is more broadly speaking.
\paragraph{Acknowledgement.} The work on this paper has profited from discussions with James Anderson (Reading), 
Inge Bethke (Amsterdam), Kees Middelburg (Amsterdam), Alban Ponse (Amsterdam), Stefan Rollnik (Rostock),  John V. Tucker (Swansea), and Dawid Walentek (Utrecht).

\addcontentsline{toc}{section}{References}

\end{document}